\documentclass{amsart}

\usepackage{microtype}
\usepackage{booktabs}
\usepackage{graphicx}
\usepackage{subfigure}
\usepackage{hyperref}

%Abbreviations
\newtheorem{thm}{Theorem}[section]

\newtheorem{lem}[thm]{Lemma}

\theoremstyle{definition}
\newtheorem{defn}[thm]{Definition}
\newtheorem{conv}[thm]{Convention}
\newtheorem*{example}{Example}

\theoremstyle{remark}
\newtheorem{rmk}[thm]{Remark}

\begin{document}

\title[Boundary-twisted normal form]{Boundary-twisted normal form and the number of elementary moves to unknot}
%\title{A short proof of the Hass--Lagarias theorem}
\author{Chan-Ho Suh}
\address{
Bard High School Early College \\
30-20 Thomson Ave\\
Long Island City, NY 11101
}
\email{chanhosuh@gmail.com}
\thanks{Research was partially funded by the National Science Foundation (VIGRE DMS-0135345 and DMS-0636297).}

\subjclass[2000]{Primary 57M, 57N10; Secondary 68Q25}

\date{\today}

\begin{abstract}
Suppose $K$ is an unknot lying in the 1-skeleton of a triangulated $3$-manifold with $t$ tetrahedra.  Hass and Lagarias showed there is an upper bound, depending only on $t$, for the minimal number of elementary moves to untangle $K$.  We give a simpler proof, utilizing a normal form for surfaces whose boundary is contained in the 1-skeleton of a triangulated 3-manifold.  We also obtain a  significantly better upper bound of $2^{120t+14}$ and improve the Hass--Lagarias upper bound on the number of Reidemeister moves needed to unknot to $2^{10^5 n}$, where $n$ is the crossing number.
\end{abstract}

\maketitle

\section{Introduction}
Suppose $M$ is a triangulated, compact 3-manifold with $t$ tetrahedra and $K$ is an unknot in the 1-skeleton.  Recall that $K$ can be isotoped in $M$ using polygonal moves across triangles called \emph{elementary moves}.  J. Hass and J. Lagarias  obtained an upper bound of $2^{10^7t}$ on the minimum number of elementary moves to take $K$ to a triangle in one tetrahedron \cite{hass-lagarias2001}.

The central idea of their proof is to use normal surface theory.  Take a double barycentric subdivision of $M$, and let $N(K)$ denote the simplicial neighborhood of $K$.  Since $K$ is the unknot, a homotopically nontrivial curve $l$ on $\partial N(K)$ bounds a normal disc $D$ in $M - \operatorname{int}(N(K))$.  Normal surface theory gives an exponential upper bound in $t$ on the number of triangles in a minimal such $D$ \cite{hass-lagarias-pippenger1999}.  Naively we might think to move $K$ across $D$ and obtain the bound that way, but this overlooks that $K$ must first be moved to $l = \partial D$.  Thus Hass and Lagarias break up the complete isotopy of $K$ to a triangle into three parts.  First isotope $K$ to $\partial N(K)$, isotope across $\partial N(K)$ to $l$, and finally isotope across $D$.  The bound on $D$ gives a similar bound for the number of elementary moves to isotope across $D$.  The bounds for the number of elementary moves realizing the other isotopies are obtained from an involved analysis, which takes up a good part of \cite{hass-lagarias2001}.  Independently, but also using normal surface theory, S. Galatolo worked out a bigger bound, but in his announcement the problem of how to maneuver $K$ to $l$ is not adequately addressed \cite{galatolo1998}.  

By considering a normal form for surfaces whose boundary is contained in the 1-skeleton of a triangulated 3-manifold, we avoid the retriangulation and subsequent isotopies related to $N(K)$.  This not only substantially simplifies the proof but improves the upper bound on the number of moves to $2^{120t+14}$.

A corollary of bounding the number of elementary moves is a bound on the number of Reidemeister moves to unknot an unknot diagram with crossing number $n$.  Our result lets us improve the bound given by Hass and Lagarias from $2^{10^{11}n}$ to $2^{10^5n}$.

In section~\ref{sec:btnf} we define the normal form and relate it to a restricted version of normal surface theory in truncated tetrahedra.  Section~\ref{sec:enumeration} explains how to enumerate the normal discs, which is a key technical detail for this paper.  In section~\ref{sec:normalization} we explain how an essential surface spanning a link in the 1-skeleton can be isotoped into boundary-twisted normal form.  Section~\ref{sec:fund} proves the existence of a fundamental spanning disc for the unknot.  In Section~\ref{sec:disc_bound}, using the previous results, we obtain an improved upper bound on the number of triangles needed in a spanning disc for an unknot.  From this bound, we obtain bounds which improve the main results of \cite{hass-lagarias2001}.  
%Finally, in Section~\ref{sec:comparison}, we explain in greater detail the difference between our method and the Hass--Lagarias method and give another approach to reduce the Hass--Lagarias bound on Reidemeister moves.

\section{Boundary-twisted and boundary-restricted normal forms}\label{sec:btnf}

\subsection{Marked triangulations}
Given a pair $(M, L)$ consisting of a compact 3--manifold $M$ and link $L$ in $M$, we say $\mathcal T$ is a \emph{marked triangulation} for $(M, L)$ if $\mathcal T$ is a triangulation of $M$ with $L$ contained in the 1--skeleton.  The vertices and edges of $\mathcal T$ which belong to $L$ are called \emph{marked vertices and edges}.

If two edges of the same face of a tetrahedron are marked and not part of a triangle component of $L$, there is an isotopy of these edges to the third edge of the face reducing the number of marked edges.  It will suffice to consider that $L$ has no triangle component and the number of edges is minimal.

\begin{conv}
No component of $L$ is a triangle, and in any tetrahedron of a marked triangulation, every face has at most one marked edge.
\end{conv}

\subsection{Boundary-twisted normal form}
A tetrahedron, $T$, of a marked triangulation is of 9 possible types and is called a \emph{marked tetrahedron}.  Normal disc types in a marked tetrahedron are more complicated than the usual triangles and quads.

%%%%%%%%% Normal arcs in a face %%%%%%
\begin{figure}[htbp]
\begin{center}
\subfigure[edge-edge arc \label{ee_arc}]{
\includegraphics[width=2.6cm]{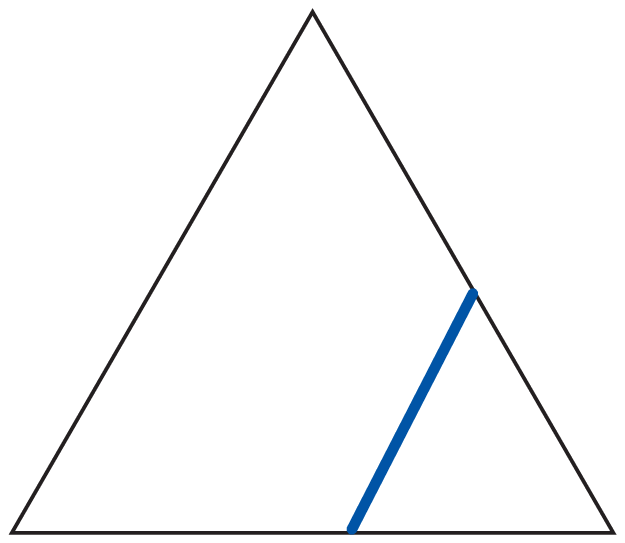}
}
\hspace{.3cm}
\subfigure[vertex-edge arc \label{ve_arc}]{
\includegraphics[width=2.6cm]{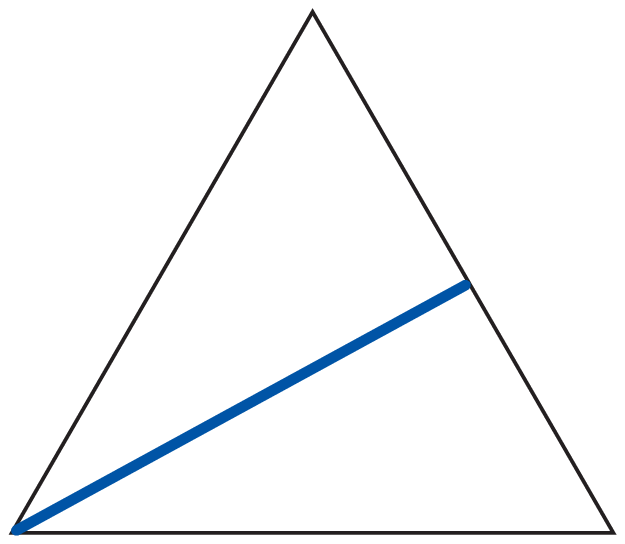}
}
\hspace{.3cm}
\subfigure[vertex-vertex arc \label{vv_arc}]{
\includegraphics[width=2.6cm]{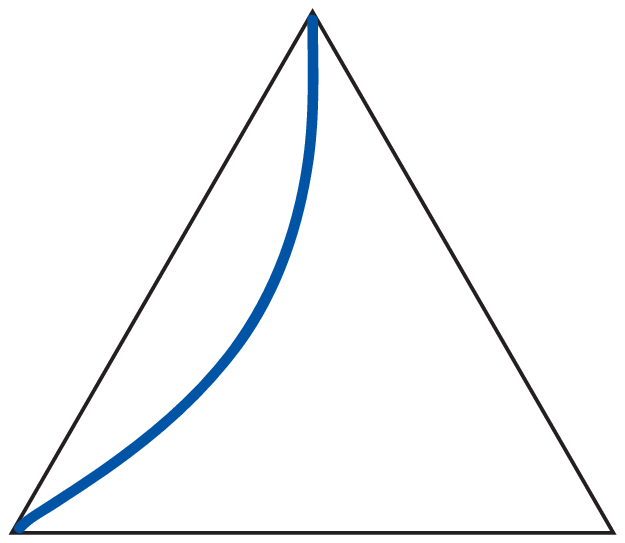}
}
\caption{Normal arcs in a face}
\label{fig:normal_arc-bdy}
\end{center}
\end{figure}
%%%%%%%%%%%%%%%%%%%%%%

\begin{defn}
We define a \emph{normal arc} in a face of $T$ to be a properly embedded arc such that one of the following holds:
\begin{itemize}
\item starts in the interior of an edge and ends in the interior of another edge (Figure~\ref{ee_arc})
\item starts at a marked vertex and ends in the interior of the opposite edge (Figure~\ref{ve_arc})
\item starts at a marked vertex and ends at a different marked vertex (Figure~\ref{vv_arc})
\end{itemize}
\end{defn}

\begin{defn}\label{defn:twisted}
We define a \emph{twisted normal disc} $D$ in a marked tetrahedron $T$ to be a properly embedded disc such that its boundary $\partial D$ satisfies the following conditions ($e$ is an edge and $\Delta$ is a face):
\begin{enumerate}
\item $\partial D \cap \operatorname{int}(\Delta)$ consists of normal arcs for every face $\Delta$
\item $\partial D \cap e$ is one of the following: a) empty, b) one endpoint of $e$, c) an arc and $e$ is marked, d) an interior point of $e$ and $e$ is unmarked, e) both endpoints of $e$ and $e$ is unmarked 
\item a pair of normal arcs of $\partial D$ abutting the same point or arc of $\partial D \cap e$ must be in different faces
\item $\partial D \cap \Delta$ does not have two normal arcs with endpoints at the same marked vertex 
\end{enumerate}
\end{defn}

\begin{defn}[Boundary-twisted normal form]
Let $(M, L)$ be a 3-manifold with link $L$ and marked triangulation $\mathcal T$.  Suppose also $S$ is a surface with boundary contained in $L$.  We say $S$ is in \emph{boundary-twisted normal form} if its intersection with every tetrahedron consists of twisted normal discs.
\end{defn}

In attempting normal surface theory with boundary-twisted normal surfaces, a technical obstacle arises: it is necessary to consider an additional kind of surface punctured by marked edges.  Rather than work with that setup, we have chosen to work in a more familiar setting.  We will truncate each marked tetrahedron at its marked vertices and edges.  Then we do normal surface theory with respect to these truncated tetrahedra while imposing some additional conditions on the surface's boundary behavior in the truncated regions.

\begin{defn}
Let $T$ be a marked tetrahedron.  Then the truncation of $T$, denoted $T_{tr}$, is a polyhedron obtained by the process indicated in Figure~\ref{fig:truncate}.  Given a marked triangulation $\mathcal T$, the \emph{truncated triangulation} $\mathcal T_{tr}$ is the polyhedral decomposition given by truncating every  marked tetrahedron of $\mathcal T$.
\end{defn}

\begin{figure}
\begin{center}
\subfigure[Start by truncating (cut off) marked edges]
{\includegraphics[height=4.0cm]{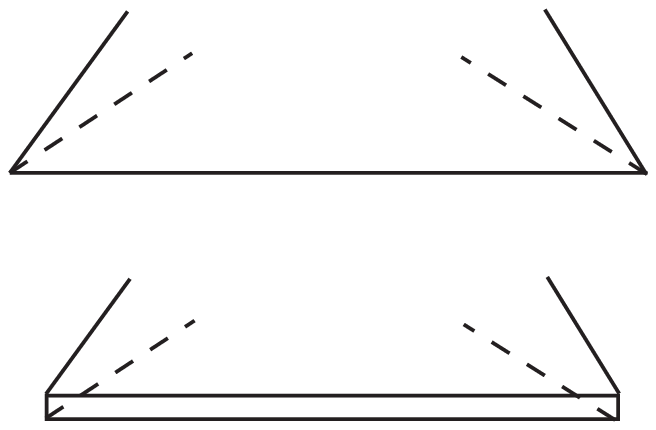}}
\hspace{.6cm}
\subfigure[Then truncate marked vertices]
{\includegraphics[width=2.2cm]{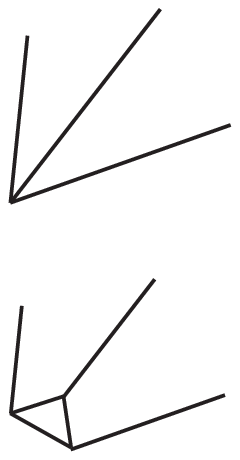}}
\end{center}
\caption{Obtaining truncated tetrahedra from marked tetrahedra}
\label{fig:truncate}
\end{figure}

\begin{defn}
A properly-embedded disc $D$ in a truncated tetrahedron $T_{tr}$ is \emph{normal} if it is the restriction to $T_{tr}$ of a twisted normal disc in $T$.  A surface in a truncated triangulation is normal if it intersects each triangulation in normal discs.
\end{defn}

\begin{rmk}
This is not the same as the usual generalization of the concept of normal disc from a tetrahedron to a polyhedron, e.g. \cite{brittenham1991} or \cite{lackenby2001b}.  Our definition follows naturally from the view of boundary-twisted normal surfaces and has the advantage of reducing the number of disc types.
\end{rmk}

Since the truncation $T_{tr}$ can be considered a subset of $T$, a normal disc in $T_{tr}$ naturally sits inside $T$.  It can be extended to a twisted normal disc of $T$ in a natural way.  In general, this extension is far from unique, since one can choose different directions to twist and in some cases twist either in the interior of an edge or at an end (see Figure~\ref{fig:twisting}).

\begin{figure}
\begin{center}
\subfigure[In the interior of a marked edge]
{\includegraphics[width=4.8cm]{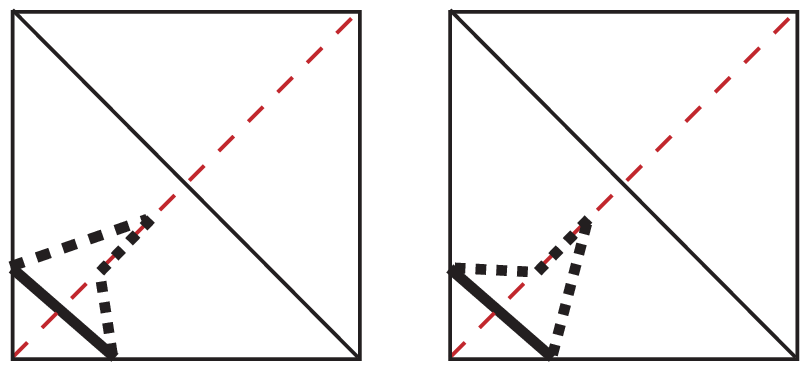}}
\hspace{.5cm}
\subfigure[At an end of a marked edge]
{\includegraphics[width=4.8cm]{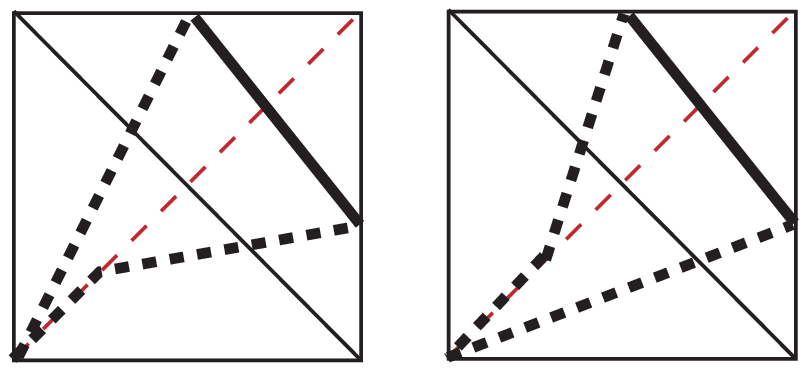}}
\caption{Discs differing by an opposite twist along a marked edge}
\label{fig:twisting}
\end{center}
\end{figure}

\subsection{Boundary-restricted normal form}
A boundary-twisted normal surface is a normal surface in the truncated triangulation $\mathcal T_{tr}$, whose boundary has been extended through the truncated region to $L$.  In order for this extension to happen in a simple way, the normal surface must satisfy additional conditions on its boundary.  We call this type of normal surface in $\mathcal T_{tr}$ a \emph{boundary-restricted normal surface}.  

There are two kinds of boundary regions given by the truncation, triangles and rectangles.  Rectangles should be visualized as ``long'' with the long sides corresponding to the longitudal direction and the short sides corresponding to the meridional direction.

We define a boundary-restricted normal surface to be a normal surface $S$ in $\mathcal T_{tr}$ such that for any rectangle $R$  we require $\partial S \cap R$ look as in Figure~\ref{fig:rectangle}.  We either have one longitudal normal arc, $n$ meridional arcs, or $n$ meridional arcs with one or two corner-cutting arcs ($n \geq 0$).  If there are two corner-cutting arcs, they must be at opposite corners.

\begin{figure}
\begin{center}
\subfigure[One longitudal arc]
{\includegraphics[width=3cm]{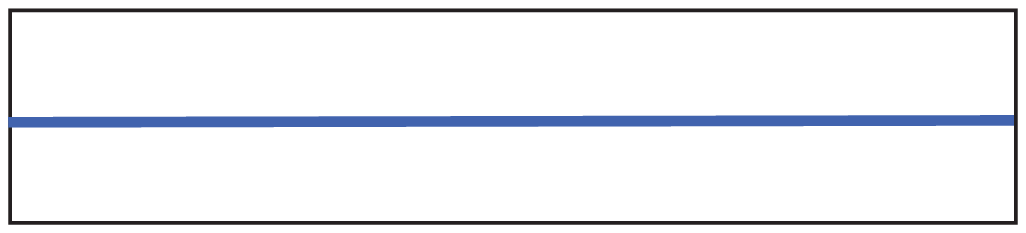}}
\hspace{.5cm}
\subfigure[$n$ meridional arc(s)]
{\includegraphics[width=3cm]{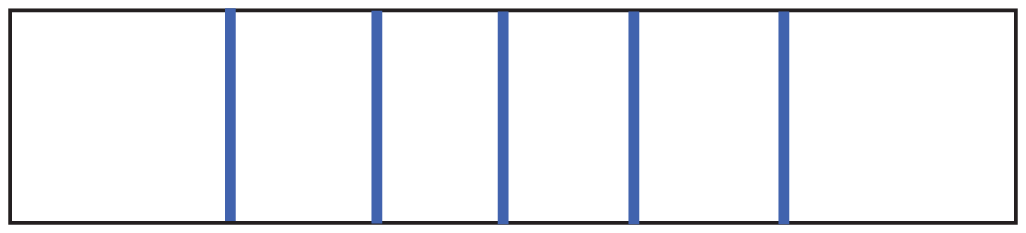}}\\
\subfigure[$n$ meridional arc(s) and one corner-cutting arc]
{\includegraphics[width=3cm]{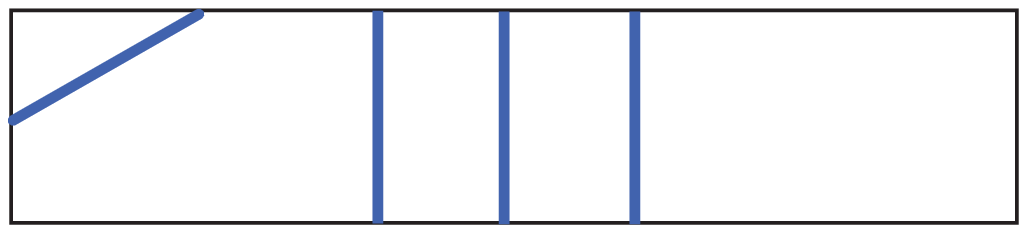}}
\hspace{.5cm}
\subfigure[$n$ meridional arc(s) and two corner-cutting arcs]
{\includegraphics[width=3cm]{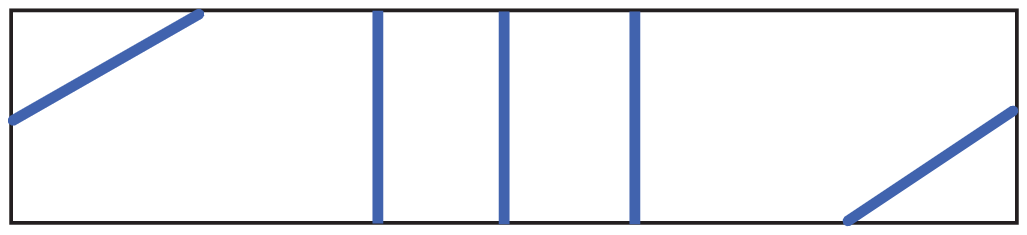}}
\caption{Pictures of the restrictions on the boundary of a boundary-restricted normal surface}
\label{fig:rectangle}
\end{center}
\end{figure}

\section{Enumeration of twisted normal discs}\label{sec:enumeration}

\begin{figure}
\begin{center}
\includegraphics[width=6cm]{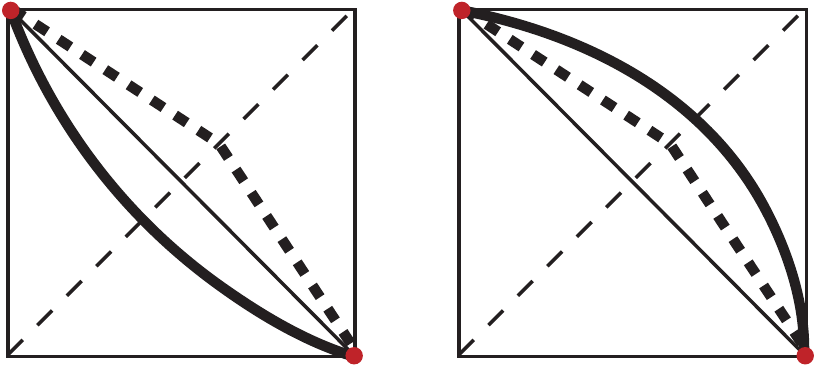}
\caption{Discs that differ from flipping a vertex-vertex arc to an adjacent face}
\label{fig:flipping}
\end{center}
\end{figure}

Figures~\ref{fig:tet_nv0e}-\ref{fig:tet_0v2e} show pictures of some twisted normal discs in the different marked tetrahedra.  To obtain further disc types from a particular figure, apply a symmetry of the tetrahedron (preserving markings), change the direction of twisting along a marked edge (Figure~\ref{fig:twisting}), and/or flip a vertex-vertex arc to an adjacent face (Figure~\ref{fig:flipping}).  Each figure's caption includes a number denoting the total number of disc types obtainable from a figure in this manner.  Also note that some of the discs in one kind of marked tetrahedron also show up in other kinds, but the figures illustrate only the non-redundant ones.  For example, all of the disc types for a tetrahedron with one marked vertex and one marked edge show up in the tetrahedron with two marked vertices and one marked edge.

Except for these operations and the non-illustration of redundant disc types, these figures are complete.  The organization of the figures is meant to suggest the method of enumeration.  We now explain the enumeration in further detail. 

\subsection{Marked tetrahedron with only marked vertices}
For an unmarked tetrahedron, the only twisted normal discs are the standard normal discs: triangles and quads (Figure~\ref{fig:usual}).  This gives the usual 7 disc types.

For a marked tetrahedron with one or more marked vertices but no marked edges, in addition to the previous triangles and quads, we have vertex touching triangles (Figure~\ref{fig:vertex_touching}) and possibilities utilizing vertex-vertex arcs (Figures~\ref{fig:bigon}, \ref{fig:triangle_vv_arc}, \ref{fig:triangle_vv_arc2}, and \ref{fig:quad_vv_arc}).  

For a tetrahedron with one marked vertex, we have 7 discs from the unmarked case and the three vertex touching triangles, making 10 disc types.  

For a tetrahedron with two marked vertices, we have 4 quads, 3 triangles, 6 vertex touching triangles, 2 triangles with a vertex-vertex arc, and 1 bigon.  This gives 16 total.  

For a tetrahedron with three marked vertices, 4 quads, 3 triangles, 9 vertex touching triangles, 6 triangles with one vertex-vertex arc, 4 triangles with all vertex-vertex arcs, and 3 bigons.  This gives 29 total. 

For a tetrahedron with four marked vertices, we have 4 triangles, 3 quads, 6 quads with all vertex-vertex arcs, 12 vertex-touching triangles, 12 triangles with one vertex-vertex edge, 16 triangles with all vertex-vertex arcs, 6 bigons, giving 59 total.

\begin{figure}[htbp]
\begin{center}
\subfigure[The usual suspects: triangle (4) and quad (3) \label{fig:usual}]
{\includegraphics[width=6.5cm]{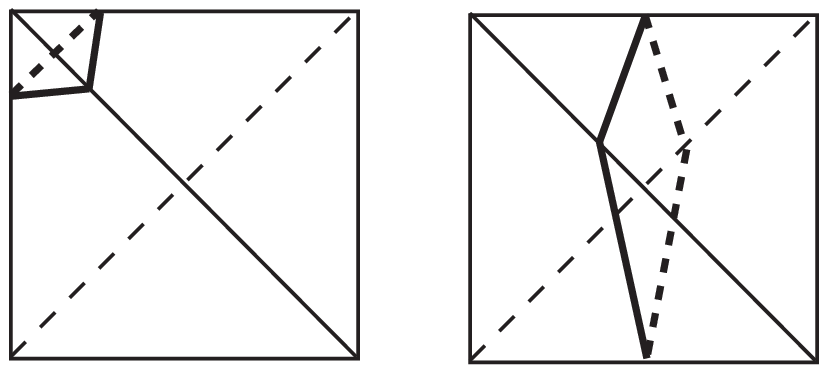}}\\
\subfigure[A vertex-touching triangle (3) \label{fig:vertex_touching} ]
{\includegraphics[width=3cm]{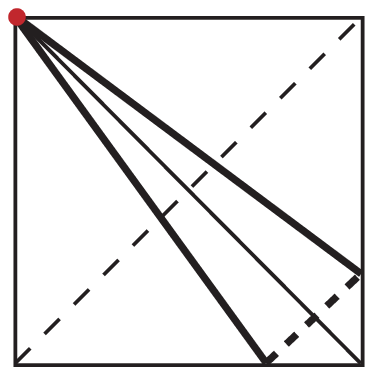}}
\hspace{.3cm}
\subfigure[A bigon (1)\label{fig:bigon} ]
{\includegraphics[width=3cm]{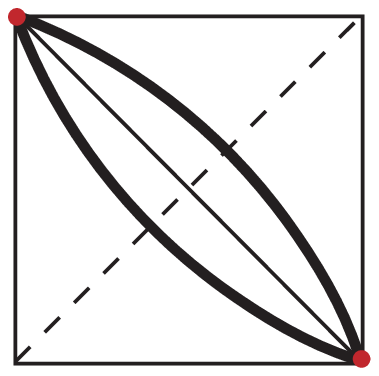}}
\hspace{.3cm}
\subfigure[Triangle with one vertex-vertex arc (2)\label{fig:triangle_vv_arc} ]
{\includegraphics[width=3cm]{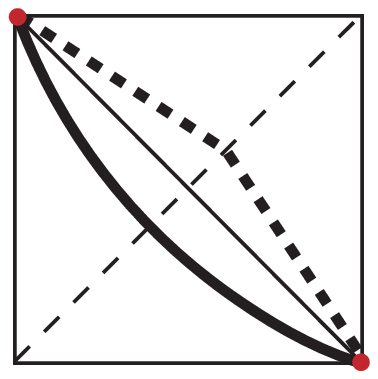}}\\
\subfigure[Triangle with all vertex-vertex arcs (4) \label{fig:triangle_vv_arc2} ]
{\includegraphics[width=3cm]{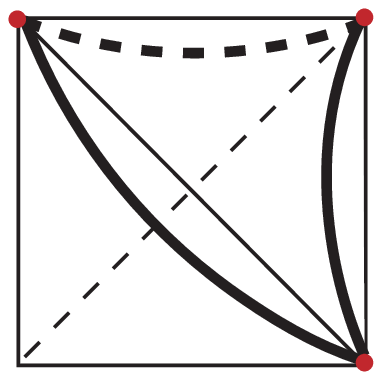}}
\hspace{.3cm}
\subfigure[Quad with all vertex-vertex arcs (6) \label{fig:quad_vv_arc} ]
{\includegraphics[width=3cm]{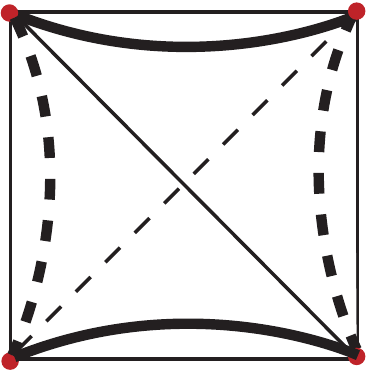}}
\hspace{.3cm}
\caption{Twisted normal discs in a tetrahedron with $n$ marked vertices ($n \geq 0$)}
\label{fig:tet_nv0e}
\end{center}
\end{figure}

\subsection{Tetrahedron with one marked edge}
For the one marked edge case, we have 30 total disc types.  

There are two triangles and one quad that don't use the marked edge at all.  Moving on to discs that utilize an endpoint of the marked edge, there are 6 vertex-touching triangles, as in a previous case.   

The remaining discs utilize the entire marked edge or a subarc of it (see Figure~\ref{fig:tet_0v1e}).  The first row illustrates those using the entire marked edge, while the last two rows show disc types using an interior or exterior subarc, resp.

\begin{figure}
\begin{center}
\subfigure[1\label{fig:triangle_full}]
{\includegraphics[width=3cm]{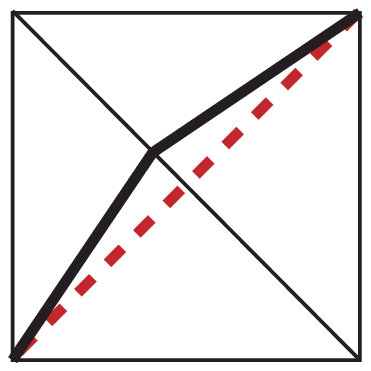}}\\
%\hspace{.3cm}
%\subfigure[2\label{fig:pentagon_full}]
%{\includegraphics[width=3cm]{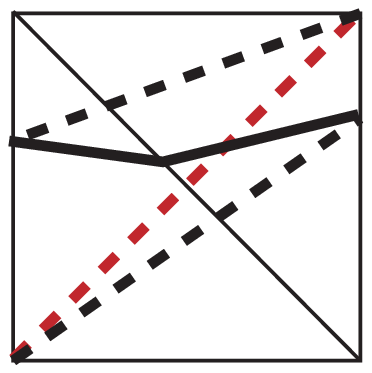}}\\
\subfigure[4 \label{fig:0v1e-int_quad}]
{\includegraphics[width=3cm]{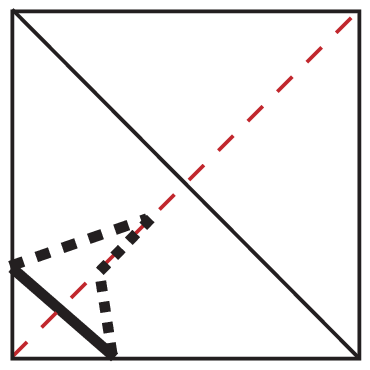}}
\hspace{.3cm}
\subfigure[4\label{fig:0v1e-int_pent}]
{\includegraphics[width=3cm]{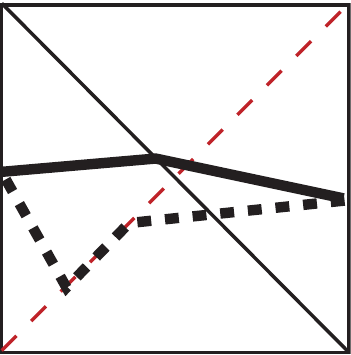}}\\
\subfigure[4\label{fig:0v1e-ext_quad}]
{\includegraphics[width=3cm]{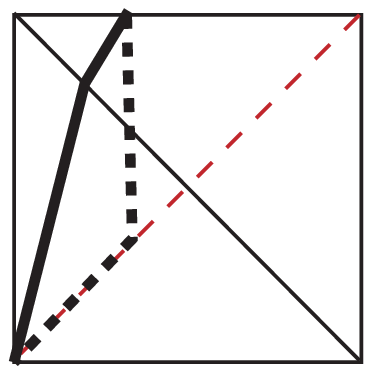}}
\hspace{.3cm}
\subfigure[4\label{fig:0v1e-ext_quad2}]
{\includegraphics[width=3cm]{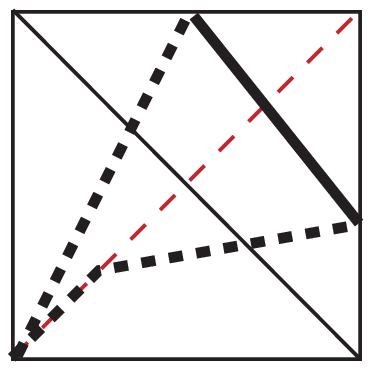}}
\hspace{.3cm}
\subfigure[4\label{fig:0v1e-ext_pent}]
{\includegraphics[width=3cm]{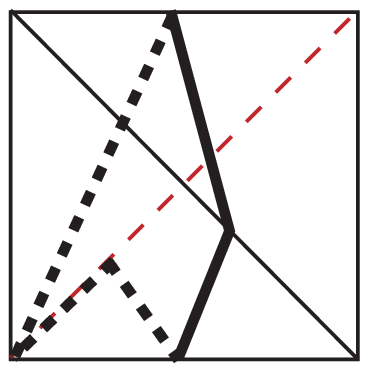}}
\caption{Twisted normal discs in a tetrahedron with one marked edge}
\label{fig:tet_0v1e}
\end{center}
\end{figure}

\subsection{Tetrahedron with one marked edge and one marked vertex}
There are 47 total disc types, with 30 types from the 1 marked edge case, and 17 new types, which we will enumerate and illustrate.  The 17 new disc types must utilize the marked vertex (see Figure~\ref{fig:tet_1v1e}).  The first row of the figure are 2 discs not utilizing a subarc of the marked edge: a vertex-touching triangle and a bigon.  Each subsequent row of the figure, as before, illustrates the disc types utilizing a particular kind of subarc of the marked edge.  

\begin{figure}
\begin{center}
\subfigure[1\label{fig:1v1e-vertex-touching}]
{\includegraphics[width=3cm]{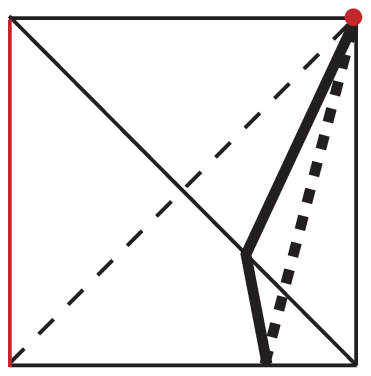}}
\subfigure[1\label{fig:1v1e-bigon}]
{\includegraphics[width=3cm]{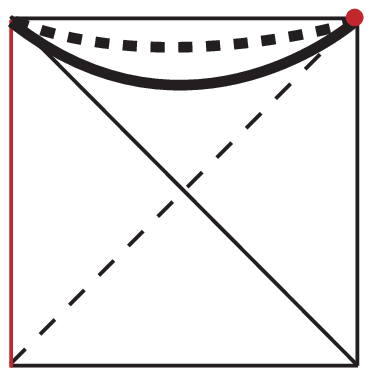}}\\
\subfigure[3\label{fig:1v1e-triangle_full}]
{\includegraphics[width=3cm]{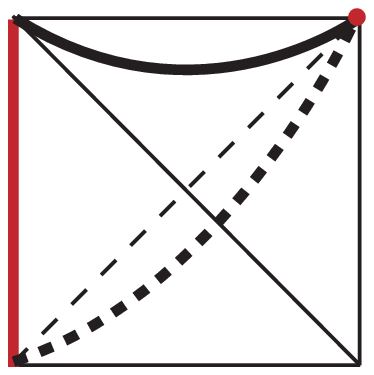}}
%\subfigure[2\label{fig:1v1e-quad_full}]
%{\includegraphics[width=3cm]{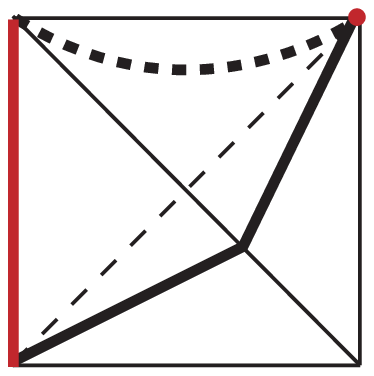}}\\
\subfigure[2\label{fig:1v1e-ext_triangle}]
{\includegraphics[width=3cm]{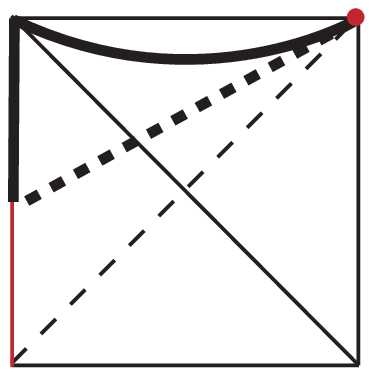}}
\subfigure[4\label{fig:1v1e-ext_quad}]
{\includegraphics[width=3cm]{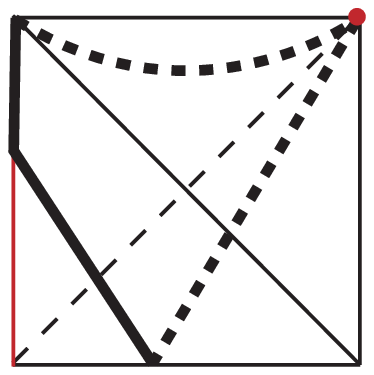}}
%\subfigure[2\label{fig:1v1e-ext_quad2}]
%{\includegraphics[width=3cm]{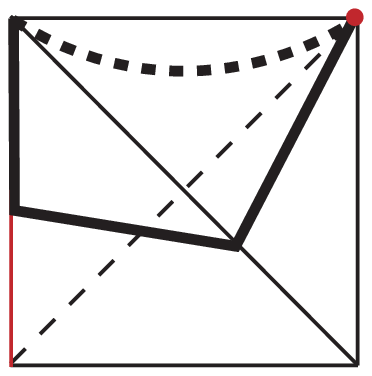}}
\subfigure[2\label{fig:1v1e-ext_quad3}]
{\includegraphics[width=3cm]{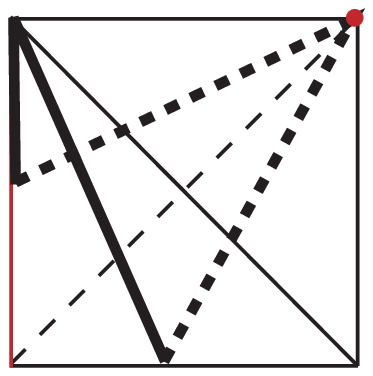}}\\
\subfigure[4\label{fig:1v1e-int_quad}]
{\includegraphics[width=3cm]{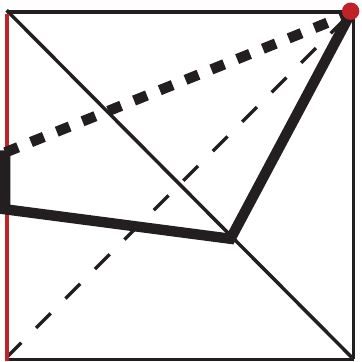}}
\caption{Twisted normal discs in a tetrahedron with one marked edge and one marked vertex}
\label{fig:tet_1v1e}
\end{center}
\end{figure}

\subsection{Tetrahedron with one marked edge and two marked vertices}
There are a total of 93 disc types.  From the 1 marked edge case, we have 30.  From the 1 marked edge and 1 marked vertex case, we get double contributions (since we now have two marked vertices), giving 2*17 = 34.  We enumerate the 29 new disc types below.  Note that they must utilize both of the marked vertices.

There are 9 disc types not using any arc of the marked edge: 1 bigon (Figure~\ref{fig:2v1e-bigon}) and 8 triangles with all vertex-vertex arcs (Figure~\ref{fig:2v1e-triangle_vv_arcs}).    

The remaining disc types are all quads.  There are 4 quads with three vertex-vertex arcs which utilize the full marked edge (Figure~\ref{fig:2v1e-quad_full_edge}).  There are 12 quads utilizing an exterior subarc of the marked edge (Figure~\ref{fig:2v1e-ext_quad}).  There are 4 quads utilizing an interior subarc of the marked edge (Figure~\ref{fig:2v1e-int_quad}).

\begin{figure}[htbp]
\begin{center}
\subfigure[1\label{fig:2v1e-bigon}]
{\includegraphics[width=3cm]{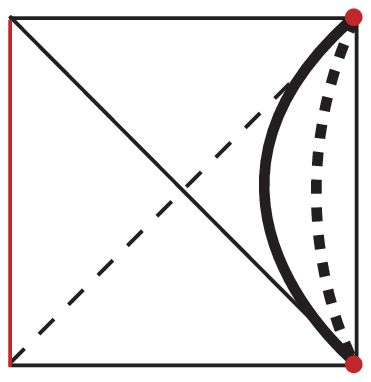}}
\hspace{.3cm}
\subfigure[8\label{fig:2v1e-triangle_vv_arcs}]
{\includegraphics[width=3cm]{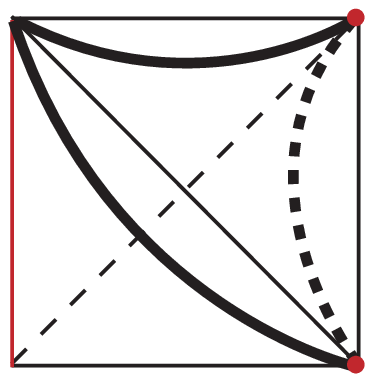}}
\hspace{.3cm}
\subfigure[4\label{fig:2v1e-quad_full_edge}]
{\includegraphics[width=3cm]{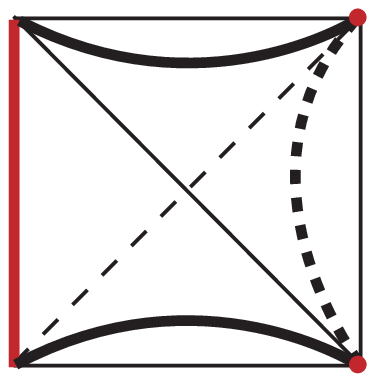}}\\
\subfigure[12\label{fig:2v1e-ext_quad}]
{\includegraphics[width=3cm]{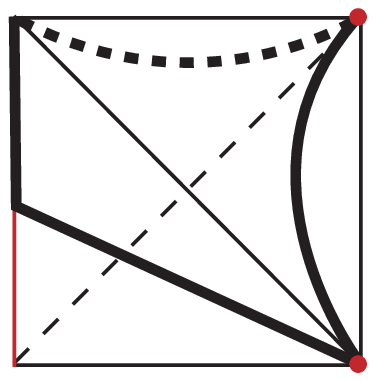}}
\hspace{.3cm}
\subfigure[4\label{fig:2v1e-int_quad}]
{\includegraphics[width=3cm]{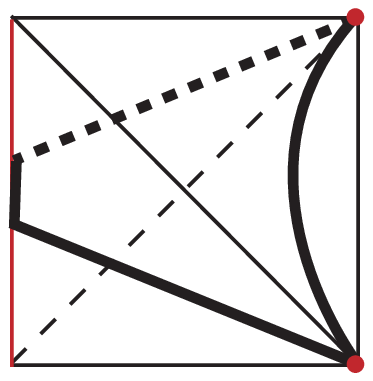}}
\caption{Twisted normal discs in a tetrahedron with marked edge and two marked vertices}
\label{fig:tet_2v1e}
\end{center}
\end{figure}

\begin{figure}[htbp]
\subfigure[8\label{fig:truncate_disc}]
{\includegraphics[width=3cm]{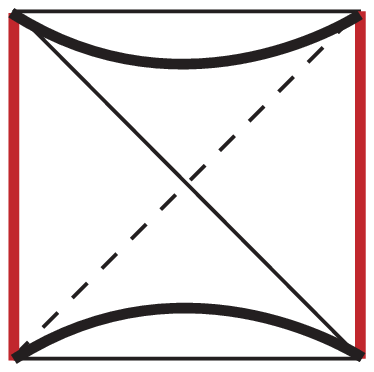}}\hspace{1cm}
\subfigure[16]
{\includegraphics[width=3cm]{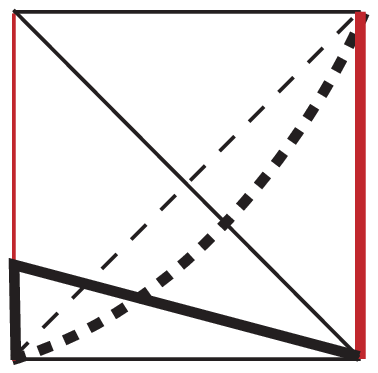}}\hspace{1cm}
\subfigure[4]
{\includegraphics[width=3cm]{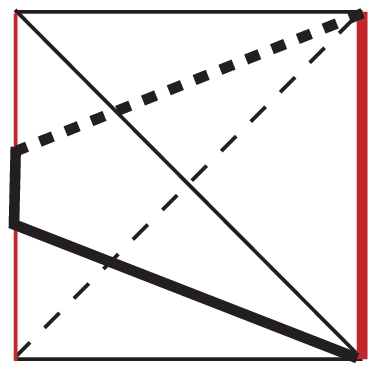}}\\
\subfigure[2\label{fig:truncate_disc2}]
{\includegraphics[width=3cm]{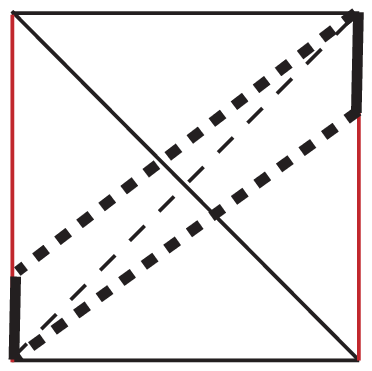}}\hspace{1cm}
\subfigure[4]
{\includegraphics[width=3cm]{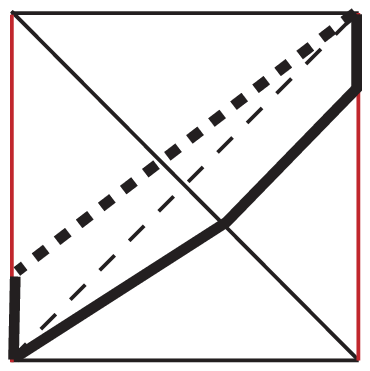}}\hspace{1cm}
\subfigure[8]
{\includegraphics[width=3cm]{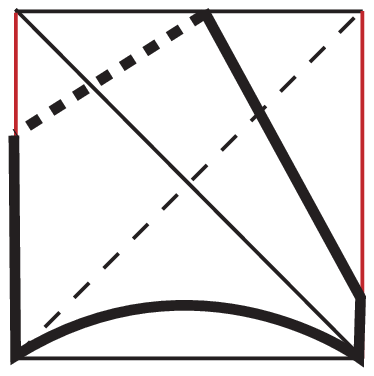}}\\
\subfigure[16]
{\includegraphics[width=3cm]{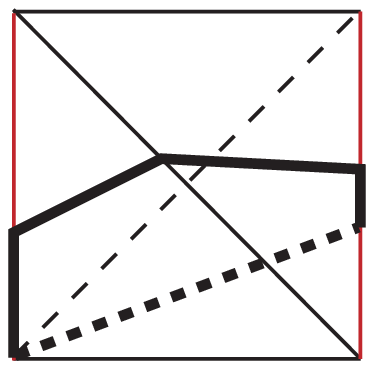}}\hspace{1cm}
\subfigure[4]
{\includegraphics[width=3cm]{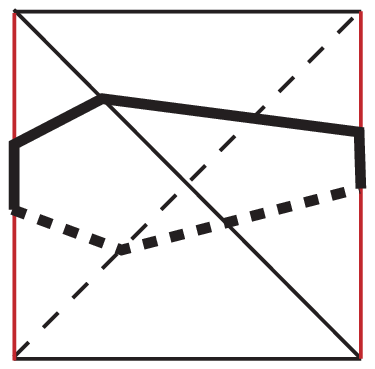}}\hspace{1cm}
\subfigure[4\label{fig:truncate_disc3}]
{\includegraphics[width=3cm]{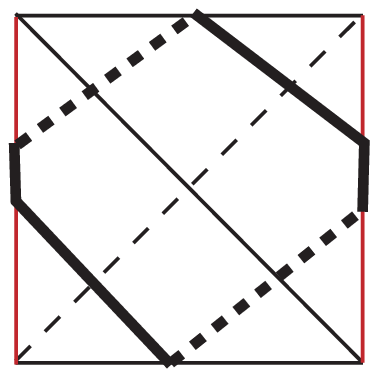}}
\caption{Twisted normal discs in a tetrahedron with two marked edges}
\label{fig:tet_0v2e}
\end{figure}

\subsection{Tetrahedron with two marked edges}
There are a total of 148 disc types.  There are 66 new disc types, which are illustrated in Figure~\ref{fig:tet_0v2e}.  

As before, the discs from the previous cases should be counted, but not all of them are compatible with this type of marked tetrahedron.  From the 1 marked edge case, we have 1 normal quad, 4 vertex-touching triangles, 8 quads with an interior subarc, and 8 quads with an exterior subarc.  The total is 21. 

From the 1 marked edge and 1 marked vertex case, we have 2 bigons, and a quadruple contribution of the rest of the disc types, except for the vertex-touching triangles which are considered in the previous paragraph.  This gives 2 + 4*15 = 62 total. 

The non-redundant discs from the 1 marked edge and 2 marked vertex case are not allowable here.

\subsection{Obtaining normal discs from twisted normal discs by truncating}
For normal surface theory, we need to know the maximal number of normal discs in any truncated tetrahedron.  
Table~\ref{big_table} shows the total number of normal discs in a truncated tetrahedron.  These are obtained by truncating the tetrahedron and observing carefully how different twisted normal disc types become the same normal disc type.  For example, after truncation of the edges, the two discs illustrated in Figure~\ref{fig:truncate_disc2} are amongst the discs obtained from Figure~\ref{fig:truncate_disc}.  The last column of the table will be used later in Section~\ref{sec:disc_bound}.  It is the maximum number of disc types whose boundary has a common normal arc in a face.  

%%%%%%%%%%%%%%%%%%%%%%%%%%%%%%%%%%
%% Table of boundary-twisted normal discs

\begin{table}[htbp]
\begin{center}
\begin{tabular}{@{}lr@{}}
\toprule
\textbf{Tetrahedron type} &  \textbf{Total}\\
\midrule
No marked vertices or edges & 7\\
one marked vertex & 10\\
two marked vertices &  16\\
three marked vertices &  29\\
four marked vertices &  59\\ 
1 marked edge & 30 \\ 
1 marked edge, 1 marked vertex & 47 \\

1 marked edge, 2 marked vertices & 93 \\ 

2 marked edges &  148\\
\bottomrule\\
\end{tabular}
\end{center}
\caption{Number of twisted normal disc types in each type of marked tetrahedron}
\label{table-twisted_normal}
\end{table}

%%%%%%%%%%%%%%%%%%
% The no-longer-so-big table of info

\begin{table}[htbp]
\begin{center}
\begin{tabular}{@{}lrc@{}}
\toprule
\textbf{Tetrahedron type} & \textbf{Total} & \quad \textbf{Max arc \#}\\
\midrule
No truncation & 7 & 2\\ 
One truncated vertex & 10 & 3\\ 
Two truncated vertices & 16 & 3\\ 
Three truncated vertices & 29 & 3\\ 
Four truncated vertices & 59 & 3\\ 
One truncated edge & 15 & 3\\ 
Two truncated edges & 17 & 6\\ 
One truncated edge, one truncated vertex & 22 & 3\\ 
One truncated edge, two truncated vertices & 40 & 6\\
\bottomrule\\
\end{tabular}
\end{center}
\caption{Number of normal disc types in each type of truncated tetrahedron}
\label{big_table}
\end{table}

%The following observation connects the two normal forms:

%\begin{thm}
%Let $(M, L)$ have a marked triangulation.  An incompressible surface $S$ with boundary in $L$ is in boundary-twisted normal form with respect to $\mathcal T$ if and only if $S' = S \cap  \mathcal T_{tr}$ is in boundary-restricted normal form with respect to $\mathcal T_{tr}$.
%\end{thm}

%\begin{proof}
%It is simple to see from the definitions that if $S$ is in boundary-twisted normal form, $S'$ is in boundary-restricted normal form.  For the other direction, we need to use a condition such as incompressibility.

%Let $L_0$ be a component of $L$ and let $N(L_0)$ denote the truncated neighborhood of $L_0$.  The pieces of $S$ inside $N(L_0)$ are incompressible.  By definition of boundary-restricted normal form, we can find a meridian $m$ composed of meridional arcs in each rectangle such that $\partial S'$ intersects it exactly once.

%Consider a disc $D$ in $N(L_0)$ bound by $m$.  By using an innermost circle argument, we can remove all simple closed curves of intersection in $D \cap S$.  In fact, because of the condition on $\partial S'$, we have $D \cap S$ is a single arc $\beta$ with endpoints on $L_0$ and $\partial N(L_0)$.  

%Now it is simple to see that $S \cap N(L_0)$ is an annulus.  Examining how this annulus connects $L_0$ to $\partial S'$ reveals that $S$ is in boundary-twisted normal form.  
%\end{proof}
%Since the proof uses standard arguments and the theorem is not necessary for this paper, we omit it.

\section{Isotoping to boundary-twisted normal form}\label{sec:normalization}

\begin{thm}\label{thm:btnf}
Let $M$ be a compact irreducible $3$-manifold with triangulation $\mathcal T$ and $L$ be a polygonal link contained in the 1-skeleton of $\mathcal T$.  Suppose $L$ has at most one edge in each triangle of $\mathcal T$ and bounds an incompressible surface $S$.  Then $S$ can be isotoped (rel $\partial$) into boundary-twisted normal form.
\end{thm}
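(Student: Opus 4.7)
The plan is to adapt the classical Haken normalization procedure to the truncated triangulation $\mathcal{T}_{tr}$. After a small isotopy rel $\partial$, put $S$ into general position with respect to the 2-skeleton of $\mathcal{T}_{tr}$, so that $S \cap F$ is a finite disjoint union of simple arcs and simple closed curves for each 2-cell $F$. I would then reduce $S$ by a finite sequence of complexity-decreasing moves until every face intersection is a normal arc and every tetrahedron intersection is a normal disc.

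The reduction proceeds in two phases, using the lexicographic complexity $(w, c)$, where $w = |S \cap \mathcal{T}_{tr}^{(1)}|$ is the weight of $S$ with respect to the 1-skeleton and $c$ is the number of simple closed curves in $S \cap \mathcal{T}_{tr}^{(2)}$. Phase one removes closed curves of intersection: an innermost such curve in a face $F$ bounds a disc $D_F \subset F$ with interior disjoint from $S$; incompressibility of $S$ yields a disc $D_S \subset S$ with the same boundary; irreducibility of $M$ produces a ball bounded by $D_S \cup D_F$; isotoping $S$ across this ball strictly decreases $c$ without increasing $w$. Phase two eliminates intersection arcs violating Definition~\ref{defn:twisted}: namely arcs with both endpoints on a common edge, arcs ending at an unmarked vertex, arcs meeting a marked edge in a single point rather than an arc, and pairs of arcs sharing a marked vertex in the same face. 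An outermost such bad arc in $F$ cobounds a disc $E \subset F$ with a subarc of $\partial F$, and a $\partial$-compression of $S$ along $E$ (justified by the incompressibility of $S$) strictly decreases $w$.

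Once the reduction terminates, every arc of $S \cap F$ is a normal arc of one of the three types listed before Definition~\ref{defn:twisted}. It remains to verify that each component of $S \cap T_{tr}$ is a normal disc. A non-disc component $X$ would carry a compressing or $\partial$-compressing disc inside the ball $T_{tr}$; incompressibility forces its boundary to bound a subdisc of $S$, and irreducibility provides an ambient ball along which $X$ can be isotoped into strictly simpler pieces with no new face intersections. The surviving disc components have boundary consisting of normal arcs meeting the truncation faces in the allowed patterns, and by the enumeration carried out in Section~\ref{sec:enumeration} these are precisely the normal discs of $T_{tr}$. Extending each one through the truncated neighborhoods of the marked edges and vertices produces a twisted normal disc in $T$, placing $S$ in boundary-twisted normal form.

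The main technical obstacle is carrying out phase two rel $\partial$ in faces incident to $L$. Because $\partial S \subset L$ runs along marked edges and through marked vertices, the $\partial$-compressing discs used to eliminate bad arcs must be chosen either disjoint from $L$ or tracking along $L$ in a controlled manner, so that the resulting isotopy preserves the boundary condition. The hypothesis that no triangle of $\mathcal{T}$ contains more than one edge of $L$ keeps the local configurations simple enough that these compressions exist in every case; verifying this through a case analysis on the nine marked-tetrahedron types is where the bulk of the technical work lies.
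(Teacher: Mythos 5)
Your overall strategy --- Haken-style normalization via a decreasing complexity, trading closed curves of intersection for discs using incompressibility and pushing across balls supplied by irreducibility --- is the same as the paper's, but the proposal has genuine gaps. The most serious one is that you defer to an unexecuted ``case analysis on the nine marked-tetrahedron types'' exactly the content that distinguishes this theorem from classical normalization: the moves that clean up the surface along marked edges and marked vertices (monogons at a marked vertex, $D$-curves meeting a marked edge in a point rather than an arc, a disc whose boundary meets an edge in both endpoints or in several arcs, two normal arcs in one face sharing a marked vertex, and circles that touch a marked edge without passing to an adjacent face). The paper's proof consists almost entirely of exhibiting these moves one by one and checking that each reduces a complexity; asserting that the one-edge-per-face hypothesis ``keeps the local configurations simple enough that these compressions exist in every case'' is a statement of what must be proved, not a proof. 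Relatedly, your termination argument is incomplete: the lexicographic complexity $(w,c)$ does not visibly decrease under all the moves you invoke. For instance, compressing a non-disc component of the intersection with a tetrahedron whose boundary curves are already normal changes neither the 1-skeleton intersection nor the number of closed curves in faces, so ``strictly simpler pieces'' is not measured by $(w,c)$; the paper itself has to fall back on a secondary measure in one of its edge cases. You need either a finer complexity or an explicit, move-by-move verification that something strictly decreases. Also note that several of your ``$\partial$-compressions'' are really isotopies across discs in faces and need no incompressibility, while the ones that are genuine compressions need the compressing curve to bound a disc \emph{in} $S$; the proposal does not distinguish these.

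A smaller but real issue is your choice to normalize in $\mathcal{T}_{tr}$ and only at the end ``extend through the truncated neighborhoods.'' Since $\partial S$ lies on $L$, which sits inside the truncated-away region, the general-position setup, the rel-$\partial$ requirement, and the final extension all demand control of $S$ inside that region --- this is precisely what the boundary-restricted conditions of Figure~\ref{fig:rectangle} encode --- and none of it is addressed. The paper sidesteps the problem by normalizing directly in the marked triangulation $\mathcal{T}$, where twisted normal discs (Definition~\ref{defn:twisted}) are defined, and only afterwards passes to the truncation for the normal surface theory.
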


\begin{proof}
Isotope $S$ to be in general position with respect to the 2-skeleton of $\mathcal T$.  Then $\operatorname{Int}(S) \cap \partial\Delta^3$ for any tetrahedron $\Delta^3$ consists of simple closed curves and/or embedded (open) arcs with endpoints on marked edges or vertices.  

Consider the portions of the marked edges of $\Delta^3$ which abut pieces of $S$ inside of $\Delta^3$.  By a small isotopy of $S$, we can assume that they are arcs or endpoints.  

We call a simple closed curve in $S \cap \partial \Delta^3$ which is a boundary component of an annulus contained in $ S \cap \Delta^3$ a \emph{circle of intersection} of $S$ and $\partial\Delta^3$.  Note that strictly speaking, this is an abuse of terminology as a circle of intersection should refer to a component of the intersection which is a circle.

Define the \emph{weight} of $S$ to be the sum of the number of components of $S \cap \operatorname{int}(\Delta)$ over all faces $\Delta$ of $\mathcal T$.  We will now perform a series of weight-reducing isotopies until $S$ is in boundary-twisted normal form.  Each type of isotopy will eliminate an unwanted situation and bring $S$ closer to being in boundary-twisted normal form.  After an isotopy, we may have disturbed our work in previous stages, but we can repeat the entire simplification procedure up to that point, which drives down the weight, to ensure all the previous conditions still hold.  We will assume this is done in the following descriptions.

Consider a particular $\Delta^3$.  For each circle of intersection, starting with innermost ones, we can take the disc bound by it in $\partial \Delta^3$ and push it in slightly.  By doing so we obtain a compression disc for $S$.  Since $S$ is incompressible and $M$ is irreducible, we can isotope $S$ until its intersection with $\Delta^3$ coincides with these compression discs.  Therefore we can arrange that each circle of intersection on $\partial \Delta^3$ bounds a disc inside it.  

Consider a circle of intersection which is in the interior of a face.  We can isotope the disc of $S$ bound by the circle through the face, removing one or more circles of intersection.  We repeat this to remove any further circles of intersection inside a face. 

Now consider any circle of intersection whose restriction to a face has a non-normal curve. The arc either has both its endpoints on a marked vertex, a \emph{monogon} (Figure~\ref{monogon}), or at least one endpoint is in the interior of an edge, a \emph{$D$-curve} (Figure~\ref{D-curve}).  A monogon can be eliminated by pushing the disc it bounds into the next tetrahedron.

\begin{figure}
\begin{center}
\subfigure[monogon \label{monogon}]{
\includegraphics[width=3cm]{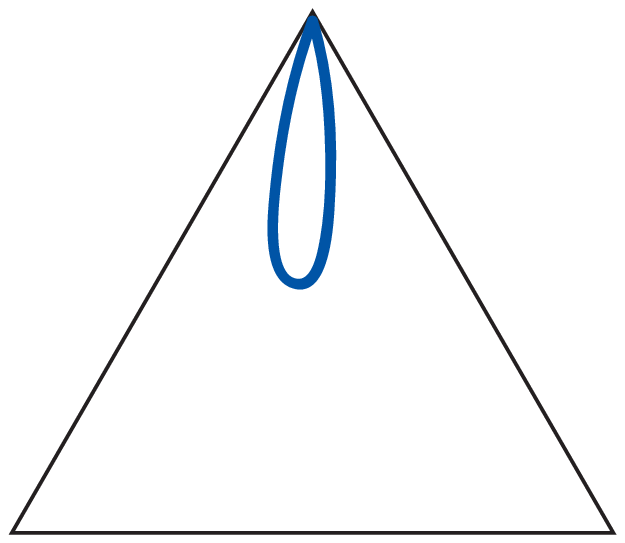}
}
\hspace{.3cm}
\subfigure[D-curve \label{D-curve}]{
\includegraphics[width=3cm]{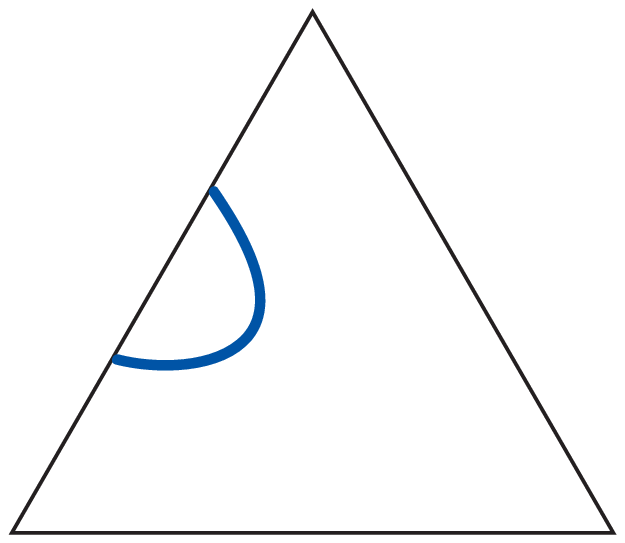}
}
\caption{Some non-normal curves}
\end{center}
\end{figure}

In the $D$-curve case, we can suppose it is innermost.  Such an innermost curve and a segment of the marked edge bounds a disc in the face.  This is a compression disc, so we can isotope $S$ to the disc and then push through the face.  This isotopy must reduce the weight.    

Because we are decreasing the weight, by repeating this procedure, we can ensure that the surface $S$ in $\Delta^3$ satisfies:
\begin{itemize}
\item Its intersection with each tetrahedron consists of discs
\item The boundary of each disc consists of normal arcs and arcs of marked edges (Definition~\ref{defn:twisted} (1) and (2c))
\end{itemize}

We can eliminate a disc with two normal arcs in a face touching the same vertex by pushing the disc through the face near the vertex (Definition~\ref{defn:twisted} (4)).  This reduces the weight by combining the two arcs into one.  

It may be the case that the boundary of a disc $D$ intersects an edge $e$ more than once, including at least once in its interior.  We can suppose $D$ is innermost.  We now have two cases: 1) $e$ is not marked. 2) $e$ is marked.  

Case 1): There is a disc $E$, whose interior is disjoint from $S$, such that $\partial E = \alpha \cup \beta$, where $\alpha$ is an arc on $S$ connecting two points of $\partial D \cap e$ and $\beta$ is a subarc of $e$.  We can push $S$ across $E$ so that $\alpha$ is taken to $\beta$.  A further small isotopy of $S$ through $e$ will split $D$ into two discs in the tetrahedron.  If the two points of $\partial D \cap e$ were in the interior of $e$, these two discs have combined weight less than that of $D$.  Otherwise, one point was at an endpoint of $e$ and it is possible the two discs have the same weight as that of $D$.  Nonetheless, the intersection of $S$ with the 1-skeleton is simplified.  Since none of our weight-reducing isotopies increase intersection with the 1-skeleton, we can eliminate this situation also. 

Case 2): If $D$ intersects $e$ only at its endpoints, we need to avoid violating Definition~\ref{defn:twisted} (2e).  In this case, there must be an arc $\alpha$ on $D$ joining the endpoints of $e$.  Then $\alpha$ and $e$ bound a compression disc which gives a weight-reducing compression.  Otherwise $D$ intersects $e$ in its interior in at least one arc.  By taking an arc $\alpha$ of the marked edge adjacent to two innermost components of $\partial D \cap e$ and an arc $\beta$ in the interior of $S$ connecting the endpoints of $\alpha$, we obtain a simple closed curve $\gamma =\alpha \cup \beta$.  $\gamma$ bounds a compression disc in $\Delta^3$ for $S$.  Thus we can isotope $S$ to the disc, reducing the weight.  

Now consider a circle of intersection which touches a marked edge but does not pass through from one face to another (this violates Definition~\ref{defn:twisted} (3)).  Suppose the curve bounds the disc $D$.  We can reduce the weight by pushing the part of $D$ near the marked edge through a face.    

Eventually we arrive at a situation where the pieces of $S$ inside $\Delta^3$ are exactly what we defined previously as twisted normal discs (see Definition~\ref{defn:twisted}).  In particular, if Definition~\ref{defn:twisted} (2) is not satisfied, clearly we can do one of the above isotopies.

Now we move onto another tetrahedron and repeat the entire process until $S$ is cleaned up to the requisite form inside the tetrahedron.  Note this may ruin our work in the previous tetrahedron.  We move onto yet another tetrahedron and do the process.  By cycling through the tetrahedra and noting that the weight is strictly decreasing, eventually the weight is at a minimum and $S$ is in boundary-twisted normal form.
\end{proof}

\section{Existence of a fundamental unknotting disc}\label{sec:fund}
Normal surface theory in truncated triangulations is analogous to that in standard triangulations.  Each normal disc type in a truncated tetrahedron is represented by a variable.  There are integer linear equations in these variables, \emph{matching equations}, given by each pair of truncated tetrahedra sharing a face.  A coordinate vector is obtained from a normal surface by simply counting the number of normal discs of each type it contains.  A normal surface's vector must be a solution to the equations, but non-negative integral solutions are not necessarily normal surfaces.  Non-negative integer solutions which satisfy a no-intersection condition are \emph{uniquely} realized as a normal surface.  The condition is that certain pairs of coordinates cannot both be nonzero; the pairs correspond to normal disc types that cannot be realized at the same time as disjoint discs.

A normal surface with vector $v$ such that $v \neq v_1 + v_2$ where each $v_i\neq 0$ is a non-negative integer solution to the matching equations is called \emph{fundamental}.  All such $v$ constitute the minimal Hilbert basis for the system of equations.  This basis is a finite, generating set for all non-negative integer solutions and can be found using methods of integer linear programming.

Let $S$ be a normal surface with solution vector $v(S) = v_1 + v_2$ and each $v_i$ is a nonnegative integer solution vector.  The no-intersection condition on $v(S)$ passes to each $v_i$ so that $v_i=v(S_i)$ for a normal surface $S$.  There is a \emph{Haken sum}, a cut-and-paste addition of normal surfaces, such that the Haken sum of $S_1$ and $S_2$ is $S$ (Figure~\ref{fig:haken_sum}).  Two important properties are that Euler characteristic is additive under Haken sum, and there is a way of cutting and pasting the ``wrong way'', the so-called \emph{irregular switch}, that creates a non-normal surface.

\begin{figure}
\begin{center}
\subfigure[Example of intersection of two summands of a normal surface]
{\includegraphics[width=3.2cm]{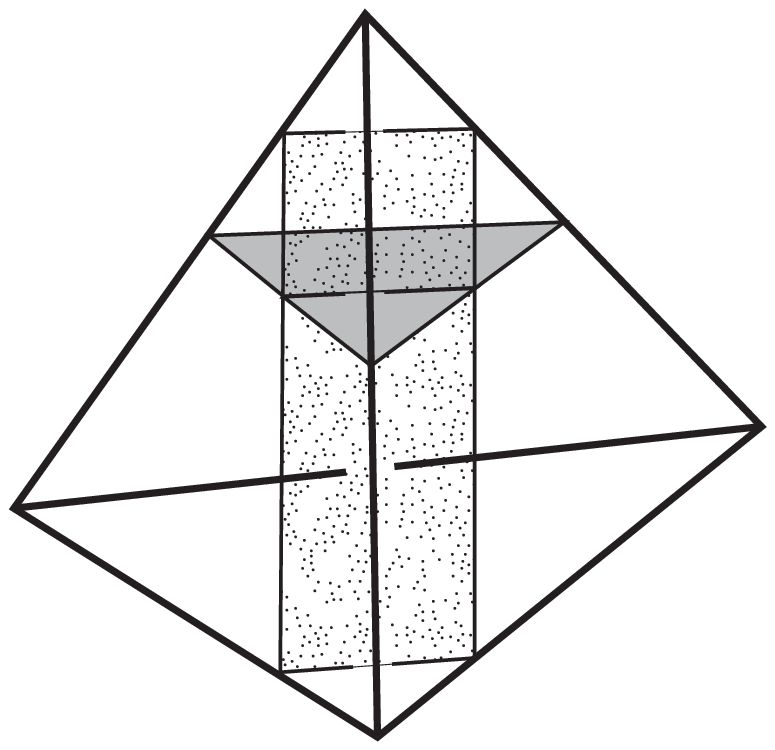}}
\hspace{.3cm}
\subfigure[A cut-and-paste giving normal discs]
{\includegraphics[width=3.2cm]{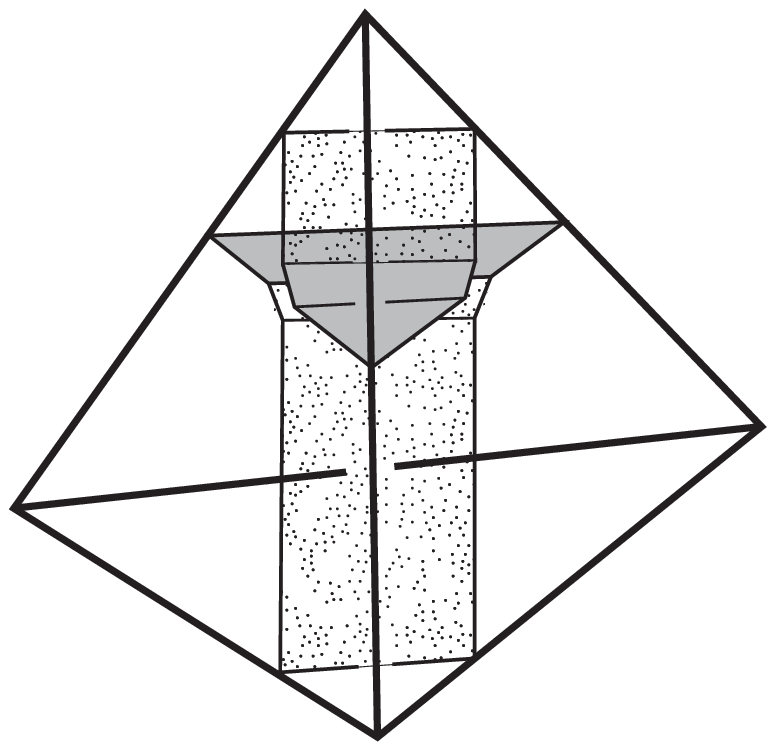}}
\hspace{.3cm}
\subfigure[A cut-and-paste resulting in non-normal discs]
{\includegraphics[width=3.2cm]{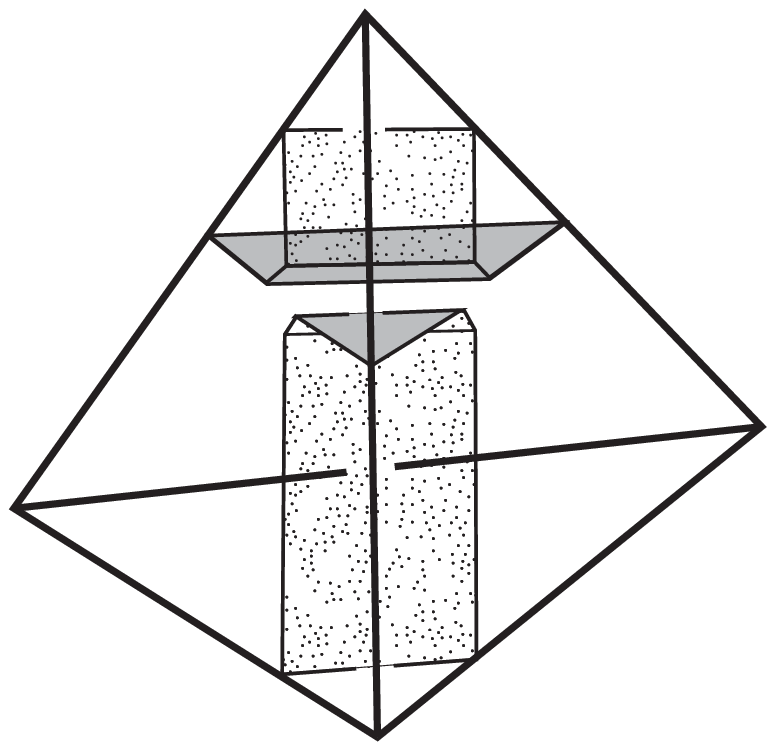}}
\caption{The Haken sum}
\label{fig:haken_sum}
\end{center}
\end{figure}

In our setup, we work with boundary-restricted normal surfaces.  In order that our arguments work, we need only ensure that passing to a summand lets us continue working with boundary-restricted normal surfaces.  That is the content of the following simple, but crucial, lemma.

\begin{lem}
Let $S$ be a boundary-restricted normal surface and suppose $v(S) = v_1 + v_2$, where each $v_i$ is a nonzero solution to the matching equations.  Then $v_i = v(S_i)$ for a unique boundary-restricted normal surface $S_i$ and $S$ is the Haken sum $S_1 + S_2$.
\end{lem}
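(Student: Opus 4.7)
The plan is to reduce the statement to the standard Haken-summand theorem for normal surface theory in the truncated triangulation $\mathcal{T}_{tr}$, and then verify separately that each summand inherits the boundary-restricted rectangle condition of Figure~\ref{fig:rectangle}.

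First, I would confirm that the pairwise ``no-intersection'' compatibility conditions on $v(S)$ pass to each $v_i$. Each such condition forbids two specified coordinates from both being positive; since $v_1$ and $v_2$ are nonnegative with $v_1+v_2=v(S)$, any zero coordinate of $v(S)$ forces both corresponding coordinates of the summands to vanish. The standard existence/uniqueness theorem recalled in the preceding paragraph then yields unique normal surfaces $S_i\subset\mathcal{T}_{tr}$ with $v(S_i)=v_i$, and the Haken sum $S_1+S_2$ equals $S$.

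It then remains to show that each $S_i$ satisfies the boundary-restricted condition. Fix a truncation rectangle $R$. Each normal disc type $\alpha$ contributes a fixed multiset of arcs $n_\alpha(R)$ to $R$, so the vector identity $v(S)=v_1+v_2$ gives the additivity
\[
\partial S \cap R \;=\; (\partial S_1 \cap R) \;\sqcup\; (\partial S_2 \cap R)
\]
of multisets of properly embedded arcs in $R$. Since $\partial S\cap R$ is one of the four admissible configurations shown in Figure~\ref{fig:rectangle}, it suffices to check that every submultiset of an admissible configuration is again admissible. This is immediate in the single-longitudinal-arc case and in the purely meridional case; in the cases involving corner-cutting arcs, each submultiset contains at most the corner-cutting arcs already present in $\partial S\cap R$, and when two are present they are already at opposite corners, so no forbidden adjacent-corner configuration can arise.

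The only piece requiring any care is the final case inspection on rectangles with corner-cutting arcs, but because admissibility is closed under taking submultisets, this amounts to routine checking against the four pictures. Uniqueness of each $S_i$ as a boundary-restricted normal surface follows at once from its uniqueness as an ordinary normal surface in $\mathcal{T}_{tr}$.
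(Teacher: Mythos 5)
Your proposal is correct and follows the same route as the paper: pass the no-intersection condition to the summands to get unique normal surfaces $S_i$ with $S = S_1 + S_2$, then observe that the rectangle conditions of Figure~\ref{fig:rectangle} are inherited by each summand. The paper states this last step in one sentence; your submultiset case-check is just an explicit verification of what the paper leaves implicit.
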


\begin{proof}
From the remarks about normal surface theory preceding the lemma, clearly $v_i = v(S_i)$ for a unique normal surface $S_i$ and $S = S_1 + S_2$.  It remains only to check that $S_i$ is boundary-restricted.  But the conditions on the boundary (Figure~\ref{fig:rectangle}) are inherited from $v$ by each $v_i$, so this follows.
\end{proof}

Now recall that the \emph{weight} of a normal surface is the number of points of intersection with the 1-skeleton.

\begin{thm}\label{thm:fund}
Let $(M, K)$ have a marked triangulation $\mathcal T$ with $K$ a knot in $\mathcal T^{(1)}$.  Suppose $\mathcal T_{tr}$ is the truncation of $\mathcal T$ and $D$ is a a boundary-restricted normal disc in $\mathcal T_{tr}$.   If $D$ is of least weight over all such discs, then it is fundamental.
\end{thm}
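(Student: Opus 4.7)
The plan is a Haken-sum argument in the spirit of Jaco--Oertel. Suppose for contradiction that $D$ is not fundamental, so $v(D) = v_1 + v_2$ with each $v_i$ a nonzero nonnegative integer solution to the matching equations. By the preceding lemma, there are unique boundary-restricted normal surfaces $S_1$ and $S_2$ with $v(S_i) = v_i$, and $D$ is the Haken sum $S_1 + S_2$. Two quantities are additive under Haken sum: weight, giving $w(D) = w(S_1) + w(S_2)$, and Euler characteristic, giving $\chi(S_1) + \chi(S_2) = \chi(D) = 1$.

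My first task is to track where $\partial D$ lives among the summands. The curve $\partial D$ is a single longitude of the tube around $K$, so in every rectangle $R$ of that tube it realizes pattern~(a) of Figure~\ref{fig:rectangle} --- one longitudinal arc. A case check of the four permitted boundary patterns shows that a longitudinal arc in the sum $v_1|_R + v_2|_R$ can only arise when exactly one of $\partial S_i \cap R$ is pattern~(a) and the other is empty; the meridional and corner-cutting patterns contain no longitudinal arc, and any meridional arcs present in a summand would leave meridional residue in $\partial D$. Matching the arcs across the triangles at the marked vertices of $K$, where the longitude threads from one rectangle to the next, then forces a single summand --- call it $S_1$ --- to own the longitudinal arc in every rectangle, and hence $\partial S_1$ contains $\partial D$ as a boundary component.

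Let $E$ be the component of $S_1$ containing $\partial D$. Then $E$ is a boundary-restricted normal surface spanning $K$ with $\chi(E) \leq 1$ (equality iff $E$ is a disc) and $w(E) \leq w(S_1) = w(D) - w(S_2)$. In the favorable case where $E$ is a disc and $w(S_2) > 0$, the proof is complete: $E$ is a boundary-restricted normal disc of strictly smaller weight than $D$, contradicting minimality. The main obstacle will be ruling out the degenerate cases. If $\chi(E) \leq 0$, the deficit $1 - \chi(E)$ must be supplied by sphere or disc components in $S_1 \setminus E$ or in $S_2$ (by $\chi$-additivity), and I would use these pieces to compress or cut-and-paste $E$ into a spanning disc of no greater weight. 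If $w(S_2) = 0$, then $S_2$ is forced to be a disjoint union of vertex-linking spheres at unmarked interior vertices and similarly peripheral weight-zero pieces; such summands contribute nothing to the rectangle boundary patterns and only tube or cap off $S_1$ in controlled ways, so subtracting $v_2$ from $v(D)$ yields a strictly smaller coordinate vector still representing a boundary-restricted normal disc of weight $w(D)$, which contradicts either the nonzeroness of $v_2$ or a refined minimality choice for $D$.
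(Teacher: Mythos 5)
Your overall strategy matches the paper's: decompose $v(D)=v_1+v_2$, realize the summands as boundary-restricted normal surfaces via the preceding lemma, use the rectangle boundary patterns to force the longitude into a single summand, and then play weight additivity against Euler characteristic additivity. Your boundary-pattern analysis is essentially the paper's observation that one summand carries $\partial D$ while the other has only meridional boundary (if any), and your ``favorable case'' is the paper's first case. But there is a genuine gap where you write that in the case $\chi(E)\le 0$ you ``would use these pieces to compress or cut-and-paste $E$ into a spanning disc of no greater weight.'' This is precisely the hard case, and ``no greater weight'' is not a contradiction: if the resulting spanning disc is normal and has the \emph{same} weight as $D$, minimality is not violated. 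The paper handles this by first invoking Schubert's lemma to take the summands connected and to arrange that no curve of intersection separates both summands; since every curve on the sphere summand separates it, every intersection curve is non-separating (hence two-sided) on the punctured torus/Klein bottle summand, so compressing along a least-weight innermost disc $E$ of the sphere yields a spanning disc $D'$ of weight at most $w(D)$. If $D'$ is non-normal, normalization strictly reduces weight, a contradiction; if $D'$ is normal of equal weight, a further weight count (the complementary disc on the sphere has the same weight as $E$, and compressing along it instead, or locating a lighter innermost curve, produces a non-normal or lighter disc) finishes the argument. None of this is supplied or even outlined in your proposal, and it is the substantive content of the theorem.

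Two smaller points. First, your treatment of $w(S_2)=0$ is both unnecessary and incorrect as stated: vertex-linking spheres are made of normal triangles, each meeting three edges, so they have positive weight; in fact every nonempty normal surface here has positive weight, so $w(S_2)=0$ forces $v_2=0$ and the case does not arise. There is no need for a ``refined minimality choice.'' Second, by skipping Schubert's lemma you lose connectedness of the summands, which makes your bookkeeping of where the Euler characteristic deficit $1-\chi(E)$ lives (``sphere or disc components in $S_1\setminus E$ or in $S_2$'') harder than it needs to be; the paper's reduction to connected summands is what yields the clean two-case dichotomy (disc plus $\chi\le 0$ surface, or punctured torus/Klein bottle plus sphere).
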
 

\begin{proof}
Standard techniques as in \cite{jaco-oertel1984} are applicable here.  So we are brief on some points and refer the reader to \cite{jaco-oertel1984} for more details.

Suppose $D$ is not fundamental.  Then $D = D_1 + D_2$.  By Schubert's lemma (\cite{jaco-oertel1984}, 1.9, p. 199), we can suppose the $D_i$'s are connected.  The condition on $\partial D$ means that one summand, say $D_2$, is a surface with only meridional boundary components (if any), while the other spans $K$.  Since Euler characteristic is additive under Haken sum, we must have two cases: 1) $D_1$ a disc and $D_2$ is a torus, Klein bottle, annulus, or M\"obius band, or 2) $D_1$ is a punctured torus or Klein bottle and $D_2$ is a sphere.

In the first case, $D_1$ is a normal disc spanning $K$ of smaller weight than $D$.  In the second case, Schubert's lemma also says we can suppose that no curve of intersection is separating on both $D_i$'s.  Thus since every curve separates on a sphere, no curve of intersection separates $D_1$.  Pick such a innermost curve on $D_2$ that bounds a disc $E$ of least weight over all innermost curves.  This curve must be 2-sided on $D_1$.  By compressing along $E$, we obtain a disc $D'$ with at most weight of $D$.  If the disc is not normal, then normalization will reduce the weight, contradicting the definition of $D$.

Suppose $D'$ is normal.  Since $D$ is of minimal weight, $D'$ must have the same weight.  This implies that that the weight of $D_2 - E$ equals the weight of $E$.  If there is no other curve of intersection, compressing along the disc $D_2 - \operatorname{Int}(E)$ will result in a non-normal disc and we obtain a contradiction as before.  If there is another curve of intersection, examination shows that there is another innermost curve of intersection on $D_2$ which has less weight than $E$, a contradiction.  
\end{proof}

\section{A bound on the number of elementary moves to unknot}\label{sec:disc_bound}
J. Hass and J. Lagarias  obtained an upper bound of $2^{10^7t}$ on the minimum number of elementary moves to take $K$ to a triangle in one tetrahedron \cite{hass-lagarias2001}. Their key insight was to use normal surface theory to use a normal spanning disc $D$ for the unknot $K$, which was of exponential size (in $t$).  But this disc $D$ was normal with respect to a doubly barycentric subdivision of $M$ with a simplicial neighborhood of $K$ removed.  Before we can move $K$ across the disc $D$, $K$ must first be moved to $\partial D$.  Recall that the bulk of their efforts was on working out how to do this while bounding the number of elementary moves. 

The Hass--Lagarias bound is obtained by isotoping $K$ by elementary moves across an annulus connecting $K$ to a curve on the boundary of the removed neighborhood of $K$, isotoping across the torus boundary of the regular neighborhood to the boundary of a normal disc, and finally isotoping across the normal disc to a single triangle.  Since the number of triangles in the normal disc is at most $2^{8t+6}$,  the chief culprit for their large final bound of $2^{10^7 t}$ is because of their large bound for the first two isotopies.

We will now show how to how to improve the upper bound on the number of moves to $2^{120t+14}$, and subsequently improve the upper bound on the number of Reidemeister moves to unknot an unknot diagram with crossing number $n$ from $2^{10^{11}n}$ to $2^{10^5n}$.  The idea is to use boundary-twisted normal form to implement normal surface theory in the most direct way possible: Theorem~\ref{thm:fund} implies there is a boundary-twisted normal disc $D$ of bounded size.  We straighten out the discs to be piecewise-linear, and move the unknot along $D$ using elementary moves until it becomes a triangle in a tetrahedron.

First, we need to understand how to bound the size of the fundamental normal disc given by Theorem~\ref{thm:fund}.  In \cite{hass-lagarias-pippenger1999} an upper bound was given for the maximal coordinate of any fundamental solution.  This bound depends only on two particular features of the system of integer linear equations: the number of variables, $n$, and the maximum, $m$, over the sum of the absolute values of the coefficients of an equation.  The upper bound is $n \cdot m^\frac{n - 1}{2}$.

In the last column of Table~\ref{big_table}, for each type of truncated tetrahedron, we give the maximum number of normal disc types that share a particular normal arc type on a face.  Since 6 is the max over all tetrahedra types, we see that any matching equation will have at most 12 as the sum of the absolute values of its coefficients.  

Thus plugging these numbers into the bound from \cite{hass-lagarias-pippenger1999}, we obtain $59t\cdot 12^\frac{59t - 1}{2}$.  We get a slightly nicer form by relaxing the bound to $59t \cdot2^{118t-2}$.  

So the total number of discs of a fundamental surface is at most $59t \cdot 59t \cdot2^{118t-2} \leq 2^{120t + 10}$.

This bound with the next basic result will give a bound on the number of elementary moves to deform $K$ to a triangle.  Recall that an \emph{elementary move} of a polygonal link in a piecewise-linear 3-manifold with specified triangulation consists of two kinds of moves (and their inverses) in a tetrahedron (Figure~\ref{elem_moves}): 1) a segment of the link is divided into two by inserting a vertex 2) two segments which are edges of a triangle otherwise disjoint from the link are moved to the third edge of the triangle.

\begin{figure}
\begin{center}
\subfigure{\includegraphics[width=1.8in]{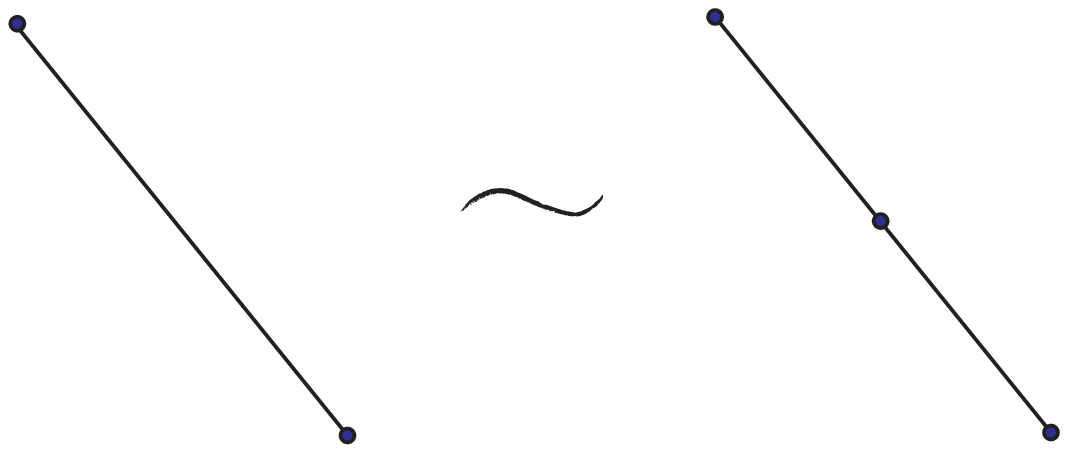}}
\hspace{1.3cm}
\subfigure{\includegraphics[width=2in]{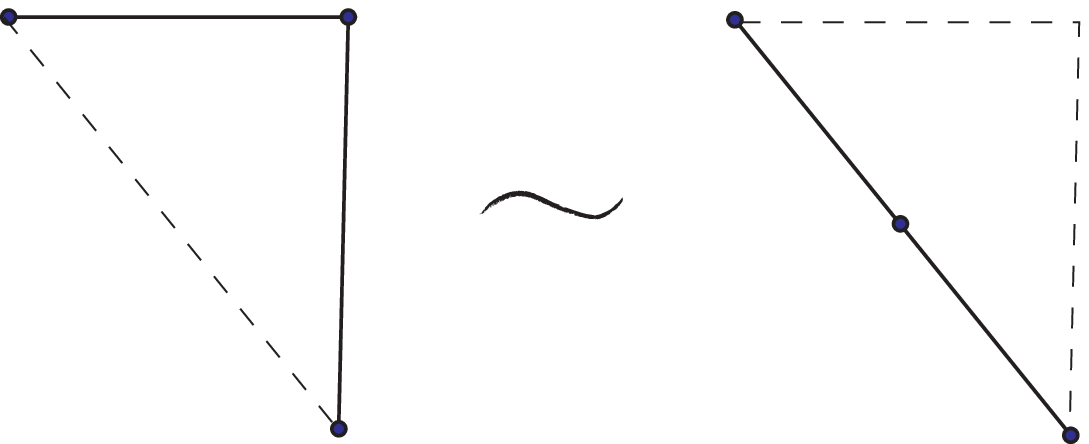}}
\caption{Elementary moves taking place in a tetrahedron}
\label{elem_moves}
\end{center}
\end{figure}

\begin{lem}\cite{hass-lagarias2001}\label{lem:elem_move}
Let $M$ be a triangulated $3$-manifold with $S$ a normal disc in $M$ with $w$ triangles.  Then $\partial S$ can be isotoped to a triangle by a series of at most $2w$ elementary moves in $M$, each of which takes place in a triangle or edge in $S$.
\end{lem}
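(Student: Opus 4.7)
My plan is to proceed by induction on the number $w$ of triangles in $S$. The base case $w = 1$ is immediate: $\partial S$ is the boundary of a single triangle of $M$, so zero elementary moves suffice, which is well within the budget $2w = 2$. For the inductive step, the strategy is to locate a triangle $\Delta$ of $S$ that has at least one edge lying on $\partial S$, push $\partial S$ across $\Delta$ using at most two elementary moves, and then apply the inductive hypothesis to the disc $S' = S \setminus \Delta$ with $w - 1$ triangles. Such a $\Delta$ must exist because $\partial S$ is a nonempty curve on the boundary of the triangulated disc $S$.

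The push across $\Delta$ breaks into cases depending on how $\Delta$ meets $\partial S$. If $\Delta$ is an \emph{ear}, meaning two of its edges lie on $\partial S$, a single type-2 elementary move inside $\Delta$ (collapsing two edges of the triangle to the third) suffices and costs one move. If instead $\Delta$ shares only one edge with $\partial S$, with the opposite vertex $c$ in the interior of $S$, we use the inverse of a type-2 move inside $\Delta$: we replace the single boundary edge by the other two edges of $\Delta$ meeting at $c$. In either sub-case the triangle count drops by one at a cost of a single type-2 move. In certain configurations a preparatory type-1 move is needed to align $\partial S$ with $\Delta$, for instance if an edge of $\partial S$ is subdivided by a vertex that must be inserted or absorbed before the type-2 move is legal; this preparatory move is exactly what accounts for the factor of $2$ rather than $1$ in the claimed bound. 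Combined with the inductive hypothesis, the total count is at most $2(w-1) + 2 = 2w$.

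The main obstacle I expect is verifying that a suitable $\Delta$ always exists and that the elementary moves are legitimate under the definition, namely that the triangle used for the type-2 move is ``otherwise disjoint from the link'' $\partial S$. When the triangulation of $S$ has no interior vertices, a classical shelling of the triangulated polygon guarantees an ear at every stage; when $S$ does have interior vertices, no ear may exist, and one must instead peel off a boundary triangle sharing only one edge with $\partial S$, using the inverse type-2 move described above. Care is needed to ensure that after repeated pushes the remaining disc is still triangulated compatibly with $S$ and that no move conflicts with a distant portion of $\partial S$; ordering the removals by a shelling of the dual tree of $S$ (breaking ties using an arbitrary order on interior vertices) handles this, and the amortized bound of $2$ moves per triangle then yields the claim.
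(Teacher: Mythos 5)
The paper does not prove this lemma at all---it is imported verbatim from Hass--Lagarias---so the only comparison available is with the source, and your argument is essentially theirs: peel the triangulated disc one triangle at a time, spending at most one preparatory type-1 move and one type-2 move (or its inverse) per triangle, for a total of at most $2(w-1)\le 2w$. The sketch is sound, but one supporting claim is wrong as stated: the dual graph of a triangulated disc is a tree only when there are no interior vertices (Euler's formula gives $E_{\mathrm{int}} = F - 1 + V_{\mathrm{int}}$, so for the cone over a hexagon the dual graph is a $6$-cycle), so ``a shelling of the dual tree'' is not available in exactly the case you flag as problematic. The correct tool is the classical fact that every triangulated $2$-disc is shellable; running a shelling in reverse guarantees at each stage a triangle $\Delta$ whose intersection with the rest of the current disc is exactly one or two full edges (never an edge plus an isolated vertex), which is precisely what makes the type-2 move legal---the triangle is then ``otherwise disjoint from the link,'' and removing it leaves a disc. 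With that substitution your induction closes and the bound follows.
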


\begin{thm}[Bounding elementary moves to unknot]\label{thm:main}
Let $M$ be a triangulated $3$--manifold with $t$ tetrahedra and $K$ is a knot in the 1-skeleton with at most one edge in each face.  Suppose $K$ is an unknot, i.e. bounds a disc in $M$.  Then there is a series of at most $2^{120t + 14}$ elementary moves taking $K$ to a triangle lying in a tetrahedron.
\end{thm}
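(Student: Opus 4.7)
The plan is to implement the strategy outlined at the start of Section~\ref{sec:disc_bound}: first produce a boundary-twisted normal spanning disc $D$ for $K$ of exponentially bounded complexity, then push $K = \partial D$ across $D$ by elementary moves via Lemma~\ref{lem:elem_move}. Since $K$ is an unknot, it bounds a disc, which is trivially incompressible, and the hypothesis that $K$ has at most one edge per face is exactly the standing convention for marked triangulations, so Theorem~\ref{thm:btnf} applies and produces a boundary-twisted normal spanning disc for $K$. Among all such discs (equivalently, among all boundary-restricted normal spanning discs for $K$ in $\mathcal T_{tr}$), I would choose one of minimum weight; Theorem~\ref{thm:fund} then guarantees it is fundamental.

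The complexity bound is already essentially computed in the preceding paragraphs of the section: combining the $n \leq 59t$ variables count from Table~\ref{big_table}, the coefficient sum bound $m \leq 12$ from the max-arc column, and the Hass--Lagarias--Pippenger fundamental solution bound $n\cdot m^{(n-1)/2}$, the total number of normal discs in $D$ is at most $2^{120t+10}$. Each such normal disc is a polygon with at most six sides (the hexagons of Figure~\ref{fig:tet_0v2e} being the worst case), hence can be triangulated by at most four PL triangles after $D$ is straightened inside $M$. Thus $D$ admits a PL triangulation with at most $w \leq 4 \cdot 2^{120t+10} = 2^{120t+12}$ triangles. Applying Lemma~\ref{lem:elem_move} yields a sequence of at most $2w \leq 2^{120t+13}$ elementary moves carrying $K = \partial D$ to a triangle lying in a single tetrahedron of $\mathcal T$, comfortably within the claimed $2^{120t+14}$.

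The main technical obstacle is ensuring that Lemma~\ref{lem:elem_move}, stated for an ordinary normal disc in a standard triangulation, genuinely applies to the straightened boundary-twisted disc $D$, whose boundary includes subarcs that twist around marked edges of $K$ and whose interior crosses faces of $\mathcal T$ in the polygonal patterns enumerated in Section~\ref{sec:enumeration}. The crucial point to verify is that the PL straightening can be performed so that every triangle of the triangulation of $D$ sits inside a single tetrahedron of $\mathcal T$ and meets each face of $\mathcal T$ in an edge of the triangulation; once this is done, the two elementary-move types of Figure~\ref{elem_moves} can be executed across the triangles of $D$ in the usual order, with a type-1 subdivision move inserted whenever a triangle of $D$ has an edge running along a twisted portion of $K$. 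This at worst contributes a bounded multiplicative slack, absorbed by passing from $2^{120t+13}$ to the stated bound $2^{120t+14}$.
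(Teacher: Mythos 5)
Your overall strategy is the same as the paper's: get a boundary-twisted normal spanning disc from Theorem~\ref{thm:btnf}, take the corresponding boundary-restricted disc $D'$ of least weight so that Theorem~\ref{thm:fund} gives fundamentality and the $2^{120t+10}$ count, straighten, triangulate each piece (at most six sides, hence a bounded number of PL triangles), and apply Lemma~\ref{lem:elem_move}. The arithmetic is fine either way.

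However, there is a genuine gap in the straightening step, and it is not the one you flag. The issue is not whether Lemma~\ref{lem:elem_move} formally applies to the polygonal patterns; it is that after you straighten the vertex--vertex arcs onto edges of the triangulation (and note that a \emph{bigon}, whose boundary is two vertex--vertex arcs, degenerates entirely to an edge and cannot be straightened to a PL disc at all), the resulting object need not be embedded. Two twisted normal discs in different tetrahedra around the same unmarked edge can each carry a vertex--vertex arc with endpoints on that edge, and straightening sends both arcs to the same edge, producing a self-intersection. The paper resolves this by a second use of the least-weight hypothesis: if the straightened disc is not embedded, one can choose two such vertex--vertex arcs that do not cobound a chain of bigons; these arcs cobound a compression disc, and compressing, renormalizing to boundary-twisted normal form, and truncating yields a boundary-restricted normal spanning disc of weight strictly less than $D'$ --- a contradiction. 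Without this argument (or some substitute), you cannot assert that $D$ becomes an embedded PL disc with $K$ as its boundary, and Lemma~\ref{lem:elem_move} has nothing to act on. Your proposal should be amended to handle the bigon degeneration and to prove embeddedness of the straightened disc; the minimal-weight choice you already made is exactly the tool needed.
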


%\begin{rmk}
%The Hass--Lagarias bound is $2^{10^7 t}$.  The larger number results from drilling out a regular neighborhood of the knot.  The knot must be connected by a possibly very long annulus to a curve on the boundary of the neighborhood.  The isotopy must move the knot across the annulus before it can be isotoped across the normal disc in the knot complement.
%\end{rmk}   

\begin{proof}
By Theorem~\ref{thm:btnf}, $K$ bounds a boundary-twisted normal disc $D$, and thus there is a boundary-restricted normal disc $D'$ in the truncated triangulation.  We will suppose $D'$ is of least weight so that Theorem~\ref{thm:fund} applies to $D'$ and obtain a bound of $2^{120t + 10}$ on the number of normal discs in $D'$.  After extending each normal disc to a twisted normal disc, the number of twisted normal discs of $D$ is also bounded by the same number.  Before we can isotope $K$ across $D$ by elementary moves, we need to straighten each twisted normal disc to a piecewise-linear disc.

Almost every twisted normal disc can be straightened to a piecewise-linear disc by straightening any vertex-vertex arcs to become an edge of the tetrahedron.  
The only exception is a ``bigon'', which has boundary composed of exactly two vertex-vertex normal arcs.  

After straightening twisted normal discs and collapsing bigons to edges, a priori, the result may not be embedded.  This can only happen when two of the tetrahedra around an unmarked edge contain twisted normal discs which have vertex-vertex arcs with endpoints on that edge.  Since the disc is not embedded, we can pick two such arcs so that they do not bound a chain of bigons.  The arcs bound a compression disc, and after the compression we have a disc with fewer points of intersection with the 1-skeleton.  Normalizing the disc to boundary-twisted normal form and then truncation will give a boundary-restricted normal disc of lesser weight than $D'$, which is a contradiction. 

By examining twisted normal discs, we see that there are at most 6 sides.  So after straightening, every disc can be divided up into at most 6 piecewise-linear triangles.  Using Lemma~\ref{lem:elem_move} with our bound, we obtain $2(6) \cdot 2^{120t + 10} = 2^{120t + 14}$ as an upper bound on elementary moves.  
\end{proof}

\subsection{Bounding the number of Reidemeister moves}

We recall some results from \cite{hass-lagarias2001}:

\begin{lem}[Triangulating a knot diagram]\label{thm:triangulate_diagram}
Given a knot diagram $D$ of crossing number $n$, there is a triangulated convex polyhedron $P$ in $\mathbb R^3$ with at most $140(n+1)$ tetrahedra so that it contains a knot in the 1-skeleton which orthogonally projects to $D$ on a plane.  Furthermore, each face of a tetrahedron contains at most one edge of the knot.    
\end{lem}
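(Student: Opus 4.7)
The plan is to realize $P$ as a thin triangulated rectangular box in $\mathbb{R}^3$ whose bottom face carries a PL projection of $D$. First, isotope $D$ so that it lies in the union of edges of a square grid in the plane, arranged so that each grid cell contains either no arc of $D$, a single transverse arc, or exactly one crossing, and each grid edge meets at most one strand. A routine count shows the number of relevant cells is $O(n)$: up to a constant factor one needs only a bounded number of cells per crossing together with a bounded number of ``connector'' cells along each arc between crossings.

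Next, thicken each grid cell to a slab $[0,1]^2 \times [-\epsilon, \epsilon]$ and equip it with a fixed combinatorial triangulation template depending on its type: an empty template with a standard minimal triangulation; an arc template realizing a PL arc at constant height $\pm \epsilon$ (determined by the over/under data propagated from the adjacent crossing) as an edge path in the 1-skeleton; and a crossing template with enough internal vertices to carry the overstrand at height $+\epsilon$ and the understrand at height $-\epsilon$ simultaneously in the 1-skeleton. The templates are designed to agree on the triangulation of the vertical side faces of the slab, so that they glue consistently into a triangulation of the whole box $P$. Within each template, whenever a face would contain two knot edges, subdivide by inserting a midpoint vertex; this adds only boundedly many tetrahedra per template and enforces the final requirement.

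The counting then proceeds by multiplying the $O(1)$ tetrahedra per template by the $O(n)$ templates used, and the constants are tuned to give the bound $140(n+1)$. The main obstacle is the combinatorial design of the crossing template: it must simultaneously use a small fixed number of tetrahedra, triangulate a convex slab, agree with the neighboring arc/empty templates on its vertical faces, realize both strands as disjoint paths in the 1-skeleton at opposite heights, and respect the marked-edge condition on every face. Once such a template (and its simpler cousins) is constructed and shown to glue compatibly, the proof reduces to elementary arithmetic.
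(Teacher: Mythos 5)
There is a genuine gap, and it is worth noting first that the paper does not prove this lemma at all: it is quoted from Hass--Lagarias \cite{hass-lagarias2001} (their construction, with the constant reduced because the knot need not lie in the interior of $P$). So the comparison is really with their construction, which is not grid-based. They exploit the combinatorics of the diagram directly: the underlying projection is a $4$-valent planar graph with $n$ vertices, $2n$ edges, and (by Euler's formula) $n+2$ complementary faces; coning each complementary face from an interior point triangulates the whole planar region with $O(n)$ triangles, no matter how the diagram is drawn. One then takes a prism over this triangulated disc (a few layers of triangle-times-interval blocks, each cut into three tetrahedra) and lifts the overstrand at each crossing into an upper layer. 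Linearity of the count comes from Euler's formula, not from any metric control of the drawing.

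Your uniform square grid does not deliver this. The polyhedron $P$ must be convex, so once the diagram is placed in a grid you must triangulate every cell of the bounding box, including the empty ones, and the bounding box of a grid drawing of an $n$-crossing diagram generically has $\Theta(n^2)$ cells (grid/orthogonal drawings of max-degree-$4$ planar graphs require quadratic area in general, and even the cells actually traversed by the strands can number $\Theta(n^2)$ since individual edges may have grid length $\Theta(n)$). Your ``routine count'' asserting $O(n)$ relevant cells is therefore not routine and is the step that would fail; it would give an $O(n^2)$ tetrahedron bound, which is not enough for the $2^{10^5 n}$ Reidemeister estimate downstream. Separately, you explicitly defer the construction of the crossing template, which is where the constant $140$ would have to be extracted; without it the stated bound cannot be checked. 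The fix is to abandon the metric grid in favor of the face-coning construction above, where both the linearity and the explicit constant fall out of counting triangles per complementary face and tetrahedra per prism.
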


\begin{rmk}
Hass and Lagarias actually get a larger number because they need to assume the knot is in the interior of $P$.
\end{rmk}

\begin{lem}[Reidemeister bound for a projected elementary move]\label{thm:bound_projection}
Let $L$ and $L'$ be polygonal links in $\mathbb R^3$.  Suppose that $L$ (resp. $L'$) has at most $n$ edges and has a link diagram $D$ ( resp. $D'$) under orthogonal projection to the plane $z=0$.  

If $k$ elementary moves take $L$ to $L'$, then at most $2k(n + \frac{1}{2}k + 1)^2$ Reidemeister moves take $D$ to $D'$.  
\end{lem}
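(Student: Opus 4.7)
The plan is to bound, per elementary move, the number of Reidemeister moves needed in the projected diagram in terms of the current edge count of the link, and then to sum over the $k$ moves. A preliminary observation is needed about edge counts: each elementary move changes the number of edges of the polygonal link by exactly $\pm 1$ (a vertex-insertion move, or its inverse, adds or removes one edge; a triangle move replaces two edges with one or vice versa). Since $L$ and $L'$ both have at most $n$ edges, the net edge change over the $k$ moves is non-positive, so at most $k/2$ of the $k$ moves can be edge-increasing. Consequently, every intermediate link in the sequence has at most $n + k/2$ edges.

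For a single elementary move across a triangle $T$, I would realize the move by a continuous linear sweep of the link across $T$. After a small generic perturbation of the projection direction, the shadow $\pi(T)$ is a (possibly degenerate) planar triangle, and the projections of the remaining edges of the intermediate link are in general position relative to it. The resulting one-parameter family of diagrams changes combinatorially only at finitely many times, and each such change is a Reidemeister move. The events split into two kinds. First, a moving strand can cross or become tangent to a static strand, giving an R1 or R2 move; each of the at most three projected edges of $\partial T$ meets at most $m - 2$ other projected link edges, accounting for $O(m)$ such events. Second, a moving strand can pass over an existing crossing of two other edges in the projection, giving an R3 move; the number of such crossings lying inside $\pi(T)$ is at most $\binom{m-2}{2}$. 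Thus a single elementary move produces at most $2(m+1)^2$ Reidemeister moves, where $m$ is the edge count of the intermediate link.

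Plugging in $m \leq n + k/2$ and summing over the $k$ moves yields the claimed bound $2k(n + k/2 + 1)^2$. The main obstacle is the genericity and bookkeeping step: one must verify that the small perturbation putting the sweep in general position does not alter $D$ or $D'$ beyond a constant number of Reidemeister moves that can be absorbed into the count, and one must confirm that vertex-insertion elementary moves contribute nothing, since they neither add nor remove crossings in the planar diagram. A careful treatment of degenerate cases, such as when an edge of $T$ projects onto another link edge or when several events coincide in the sweep parameter, requires standard one-parameter genericity arguments to guarantee that the event-counting bound above actually realizes an explicit Reidemeister sequence.
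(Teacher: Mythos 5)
The paper does not prove this lemma at all: it is recalled verbatim from \cite{hass-lagarias2001}, so there is no internal proof to compare against. Your argument --- tracking that every intermediate link has at most $n+\frac{1}{2}k$ edges because each elementary move changes the edge count by $\pm 1$, then bounding the Reidemeister moves produced by sweeping one triangle via the $O(m)$ tangency/crossing-change events and the at most $\binom{m-2}{2}$ triple-point events --- is essentially the proof given in that reference, and it is correct modulo the general-position bookkeeping you already flag.
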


Now we can prove

\begin{thm}
Let $D$ be an unknot diagram with $n$ crossings.  Then there is a sequence of at most $2^{10^5 n}$ Reidemeister moves taking $D$ to the standard unknot.
\end{thm}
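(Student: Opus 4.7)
The plan is to chain together the three preceding results. Starting from the given $n$-crossing unknot diagram $D$, I would first apply Lemma~\ref{thm:triangulate_diagram} to produce a triangulated convex polyhedron $P\subset\mathbb R^{3}$ with at most $t = 140(n+1)$ tetrahedra whose 1-skeleton contains a polygonal unknot $K$ that orthogonally projects to $D$, and such that each face of the triangulation meets $K$ in at most one edge. This last property is exactly what Theorem~\ref{thm:main} requires as input.

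Next I would invoke Theorem~\ref{thm:main} on $(P,K)$ to obtain a sequence of at most
\[
k \;\le\; 2^{120t+14} \;\le\; 2^{120\cdot 140(n+1)+14} \;=\; 2^{16800\,n+16814}
\]
elementary moves in $P$ taking $K$ to a triangle lying inside a single tetrahedron. The orthogonal projection of this final triangle is a trivial diagram which can be converted to the standard unknot by a bounded number of additional Reidemeister moves, so the interesting work is in bounding the Reidemeister cost of the intermediate elementary-move sequence.

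To pass from the spatial isotopy down to the diagram level I would feed this sequence into Lemma~\ref{thm:bound_projection}. The number of edges of $K$ is bounded by the number of 1-simplices of $P$, which is $O(t)=O(n)$ and thus negligible compared to $k$. Substituting $m = O(n)$ and the value of $k$ above into the bound $2k\bigl(m+\tfrac{1}{2}k+1\bigr)^{2}$ gives an expression dominated by $k^{3}$, yielding a Reidemeister count whose exponent is at most $3(16800\,n+16814)+O(\log n)$. After absorbing lower-order terms and the constant cost of cleaning up the terminal triangle, this sits under $2^{10^{5}n}$.

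The main obstacle I anticipate is not conceptual but arithmetic: verifying that the constants really collapse into the clean bound $2^{10^{5}n}$ uniformly in $n$. If the naive estimate is slightly too large for the smallest values of $n$, those finitely many cases can be handled separately, since a diagram with very few crossings admits an unknotting sequence of bounded length. The conceptually delicate step, obtaining a disc of controlled size without having to retriangulate near $K$, has already been done in Theorem~\ref{thm:main}; here one only needs careful bookkeeping.
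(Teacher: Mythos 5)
Your proposal is correct and follows essentially the same route as the paper: triangulate via Lemma~\ref{thm:triangulate_diagram} to get $t\le 140(n+1)$, bound the elementary moves by $2^{120t+14}$ via Theorem~\ref{thm:main}, and project via Lemma~\ref{thm:bound_projection}, where the dominant $k^{3}$ term gives an exponent of roughly $50400n+50443$, safely below $10^{5}n$ for $n\ge 2$ with the remaining small case handled directly. The paper's own proof does exactly this arithmetic (using the explicit edge bound $2t$ and isolating $n=1$).
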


\begin{proof}
We use Lemma~\ref{thm:triangulate_diagram} to obtain a triangulated polyhedron $P$ of at most $140(n+1)$ tetrahedra such that a knot in its 1-skeleton orthogonally projects to $D$.  Let $t$ be the number of tetrahedra.

By Theorem~\ref{thm:main} there is a sequence of at most $2^{120t + 14}$ elementary moves taking $K$ to a triangle in a tetrahedra.  Using the bound on the projection of an elementary move (Lemma~\ref{thm:bound_projection}) and noting $K$ contains at most $2t$ edges, we obtain the following bound on Reidemeister moves:
\[2^{120t + 14}(2\cdot t + \frac{2^{120t+14}}{2} + 1)^2 \]

This quantity is less than $2^{360t +43} \leq 2^{360\cdot 140(n+1) +43}$.  Except for $n=1$, for which the Reidemeister bound obviously holds, the last is less than $2^{10^5 n}$, which was the desired bound. \end{proof}


\begin{thebibliography}{2}

\bibitem{brittenham1991}
{\sc M. Brittenham}.  Essential laminations and Haken normal form II: regular cell decompositions.  Preprint, 1991.

\bibitem{galatolo1998}
{\sc S. Galatolo}.  On a problem in effective knot theory.  \emph{Atti Accad.
  Naz. Lincei Cl. Sci. Fis. Mat. Natur. Rend. Lincei (9) Mat. Appl.} \textbf{9}
  (1998), 299--306.

\bibitem{hass-lagarias2001}
{\sc J. Hass} and {\sc J. Lagarias}.  The number of Reidemeister moves
  needed for unknotting. \emph{J. Amer. Math. Soc.} \textbf{14} (2001), 399--428.

\bibitem{hass-lagarias-pippenger1999}
{\sc J. Hass}, {\sc J. Lagarias}, and {\sc N. Pippenger}.  The computational
  complexity of knot and link problems.  \emph{J. ACM} \textbf{46} (1999), 185--211.

\bibitem{jaco-oertel1984}
{\sc W. Jaco} and {\sc U. Oertel}.  An algorithm to decide if a
  {$3$}-manifold is a {H}aken manifold. \emph{Topology} \textbf{23} (1984), 195--209.

%\bibitem{jaco-tollefson1995}
%{\sc W. Jaco} and {\sc J. Tollefson}.  Algorithms for the complete decomposition of a closed {$3$}-manifold.  \emph{Illinois J. of Math.} \textbf{39} (1995) 359--406.

\bibitem{lackenby2001b}
{\sc M. Lackenby}.  Classification of alternating knots with tunnel number one.  \emph{Comm. Anal. Geom.} \textbf{13} (2005) 151-186.

%\bibitem{wilson2006}
%{\sc R. Wilson}.  Knots with infinitely many incompressible Seifert surfaces.  \emph{J. Knot Theor. Ramif.} \textbf{17} (2008) 537-552. arXiv: \href{http://front.math.ucdavis.edu/math.GT/0604001}{math.GT/0604001}.

\end{thebibliography}
\end{document}